\documentclass[12pt,reqno]{amsart}
\usepackage{amssymb}


\usepackage[curve,matrix,arrow]{xy}

\textwidth 15cm
\oddsidemargin 0.7cm
\evensidemargin 0.7cm
\textheight 40
\baselineskip

\theoremstyle{plain} 
\numberwithin{equation}{section}
\newtheorem{thm}[equation]{Theorem}
\newtheorem{lemma}[equation]{Lemma}

\newtheorem{prop}[equation]{Proposition}

\theoremstyle{definition}
\newtheorem{rem}[equation]{Remark}

\theoremstyle{remark}


\def\CC{{\mathcal{C}}}

\def\CE{{\mathcal{E}}}
\def\CF{{\mathcal{F}}}

\def\CM{{\mathcal{M}}}

\def\CS{{\mathcal{S}}}

\def\CW{{\mathcal{W}}}

\def\CV{{\mathcal{V}}}

\def\CV{{\mathcal{V}}}


\def\bP{{\mathbb P}}


\def\Proj{\operatorname{Proj}\nolimits}

\def\res{\operatorname{res}\nolimits}

\def\Rad{\operatorname{Rad}\nolimits}

\def\modg{\operatorname{{\bf mod}(\text{$kG$})}\nolimits}

\def\Modg{\operatorname{{\bf Mod}(\text{$kG$})}\nolimits}

\def\stmodg{\operatorname{{\bf stmod}(\text{$kG$})}\nolimits}
\def\Stmodg{\operatorname{{\bf StMod}(\text{$kG$})}\nolimits}

\def\stmodh{\operatorname{{\bf stmod}(\text{$kH$})}\nolimits}
\def\stmod{\operatorname{{\bf stmod}}\nolimits}


\def\HHH{\operatorname{H}\nolimits}
\def\Hom{\operatorname{Hom}\nolimits}
\def\PHom{\operatorname{PHom}\nolimits}

\def\End{\operatorname{End}\nolimits}
\def\HH#1#2#3{\HHH^{#1}(#2,#3)}

\def\Homul{\operatorname{\underline{Hom}}\nolimits}

\def\Ext{\operatorname{Ext}\nolimits}

\def\hgs{\HH{*}{G}{k}}
\def\hes{\HH{*}{E}{k}}
\def\hhs{\HH{*}{H}{k}}

\def\VG{{V_G(k)}}

\def\VE{{V_E(k)}}


\def\sfk{{\mathsf k}}

\title[Endomorphism ring of the trivial module]
{Negative cohomology and the endomorphism ring of the trivial module}
\author[Jon F. Carlson]{Jon F. Carlson}
\address{Department of Mathematics, University of Georgia, 
Athens, GA 30602, USA}
\email{jfc@math.uga.edu}
\thanks{Research partially supported by 
Simons Foundation grant 054813-01}

\date\today
\subjclass{20C20 (primary), 20J06, 18G80}

\begin{document}

\begin{abstract} 
Let $k$ be a field of characteristic $2$ and let $H$ be a finite group
or group scheme. 
We show that the negative Tate cohomology ring $\widehat{\HHH}^{\leq 0}(H,k)$
can be realized as the endomorphism ring of the trivial module in a 
Verdier localization of the stable category of $kG$-modules for 
$G$ an extension of $H$. This means in some cases that the endomorphism of
the trivial module is a local ring with infinitely generated radical 
with square zero. This stands in stark contrast to some known calculations
in which the endomorphism ring of the trivial module is the degree zero 
component of a localization of the cohomology ring of the group. 
\end{abstract}

\maketitle

\section{Introduction}
Let $G$ be a finite group and $k$ a field of characteristic $p >0$. 
Let $\stmodg$ be the stable category of finitely generated $kG$-modules 
modulo projective modules. It is a tensor triangulated category and its
thick tensor ideal subcategories have been classified in terms
of support varieties. Given a thick tensor ideal $\CM$,
a new category $\CC$ is obtained by localizing $\stmodg$ at $\CM$ 
by inverting any map whenever the third object in the triangle of that 
map is in $\CM$. In such a category, the endomorphism of the trivial module
$\End_{\CC}(k)$ is important because the 
endomorphism ring of every module is an algebra
over it. In favorable cases its action on other modules can be used to 
define support varieties and other invariants.

In the few cases where $\End_{\CC}(k)$ has
been computed, it turned out to be some sort of homogeneous localization 
of the cohomology ring of the group (see \cite{R}, \cite{CW}, \cite{BG}). 
However, in all of those examples, the 
localization is with respect to a thick tensor ideal determined by 
a subvariety that is a hypersurface or union of hypersurfaces. It was 
clear that the technique in those examples used for
determining $\End_{\CC}(k)$ does not work even when the determining
subvariety is a point in $\bP^2$. The point of this paper is to show that
in such a case the structure of $\End_{\CC}(k)$ can be very different.
In examples, we find that  $\End_{\CC}(k)$ is a local $k$-algebra 
whose maximal ideal is infinitely generated and has square zero. 

We work in the setting that $G = H \times C$ is a finite group 
scheme where $H$ and $C$ are subgroup schemes defined over $k$ 
and the group algebra $kC$ is the group algebra of 
a cyclic group of order $p$. The thick tensor ideal subcategory is collection
of all finitely generated $kG$-modules whose variety is the 
image of the variety of $C$ under the restriction map. Thus a module in $\CM$
is projective on restriction to a $kH$-module, but not projective when 
restricted to $kC$. In this setting we prove that
\[
\End_{\CC}(k)  \cong \widehat{\HHH}^{\leq 0}(H,k),
\]
the negative Tate cohomology ring of the subgroup scheme $H$. 
In the course of the proof we construct
the idempotent modules associated to the subcategory $\CM$. These are 
constructed directly from a $kH$-projective resolution of the 
trivial module for $H$, and it is this resolution that connects us to 
the Tate cohomology. In the case of group algebras, it has been shown
that products in negative cohomology mostly vanish \cite{BC2} whenever
the $p$-rank of the group is at least $2$. It is likely that the same 
holds for general group schemes whenever the Krull dimension of the 
cohomology ring of $H$ is at least $2$.  


\section{Preliminaries} \label{sec:prelim}
In this section we establish some notation and recall some known results.
For general reference on cohomology see \cite{CTVZ} or \cite{Cmods}. For
basics on triangulated categories see \cite{Hap}. We follow that 
development in \cite{FP} for support varieties. 

Let $k$ be a field of characteristic $p > 0$, and let $G$ be a finite group
scheme defined over $k$. Let $kG$ be its group algebra.  
Let $\modg$ denote the category of finitely generated $kG$-modules and
$\Modg$ the category of all $kG$-modules.
Recall that $kG$ is a cocommutative Hopf algebra which means that for
$M$ and $N$, $kG$-modules, $M \otimes_k N$ is also a $kG$-module with action
given by coalgebra map $kG \to kG \otimes kG$. If $G$ is a finite group, 
then $g(m\otimes n) = gm \otimes gn$ for $g \in G$, $m \in M$ and $n \in N$.
By the symbol $\otimes$ we mean $\otimes_k$ unless otherwise indicated.
In addition, $kG$ is self-injective so that projective $kG$-modules coincide
with injective $kG$-modules.

For $M$ a $kG$-module, $\Omega^{-1}(M)$ is the cokernel of an injective
hull $M \hookrightarrow I$, for $I$ 
injective. Inductively, we let $\Omega^{-n}(M) =
\Omega^{-1}(\Omega^{1-n}(M)$, On the positive side, $\Omega(M)$ is
the kernel of a projective cover $P \twoheadrightarrow M$, for $P$ projective,
and $\Omega^n(M) = \Omega(\Omega^{n-1}(M).$

The stable category $\stmodg$
of $KG$-modules modulo projectives has the same objects as $\modg$, but
the morphisms are given by the formula 
\[
\Homul_{kG}(M,N) \ = \ \Hom_{\stmodg}(M,N) \ = \ \Hom_{kG}(M,N)/\PHom_{kG}(M,N)
\]
where $\PHom_{kG}$ is the set of homomorphisms that factor through a
projective module. The stable category is a tensor triangulated
category. Triangles correspond to 
short exact sequences in $\modg$. The translation functor is $\Omega^{-1}$.
Let $\Stmodg$ denote the stable category of all $kG$-modules. It has
the same properties. 

The cohomology ring $\hgs$ is a finitely generated $k$-algebra, and every
cohomology module $\Ext^*_{kG}(M,N)$ is a finitely generated module
over $\hgs$ for $M$ and $N$ in $\modg$ \cite{FS}. 
Let $\VG= \Proj \hgs$ denote the projectivized
prime ideal spectrum of $\hgs$. If $E$ is an elementary abelian $p$-group
or rank $r$ (order $p^r$), 
then modulo its radical $\hes/\Rad(\hes) \cong k[\zeta_1, \dots, \zeta_r]$
is a polynomial ring in $r$ variables. Thus when the field is algebraically
closed, $\VE \cong \bP^{r-1}$ is projective $r-1$ space.

We define the support variety of a $kG$-module by the method of $\pi$-points
\cite{FP}. For finite groups this is essentially the same as the development
in \cite{BCR2}. A $\pi$-point for $G$ is a flat map $\alpha_K: K[t]/(t^p) \to
KG_K$, where $K$ is an extension of $k$, and $\alpha_K$ factors through
the group algebra of some unipotent abelian subgroup
scheme $C_K \subseteq G_K$
of $G_K$. For $M$ a $kG$-module, let $\alpha_K^*(M_K)$ denote the restriction
of $M_K = K \otimes M$ to a $K[t]/(t^p)$-module along $\alpha_K$. Two 
$\pi$-points $\alpha_K$ and $\beta_L$ are equivalent if for every finite
dimensional $kG$-module $M$, $\alpha_K^*(M_K)$ is projective if and only if 
$\beta_L^*(M_L)$ is projective. 

We say that a $\pi$-point $\alpha_K$ specializes to $\beta_L$ if, for any 
finitely generated $kG$-module $M$, the projectivity of $\alpha^*_K(M)$
implies the projectivity of $\beta^*_L(M)$.  So two $\pi$-points are 
equivalent if each specializes to the other. 

Let $\CV_G(k)$ denote the set of all equivalence classes of $\pi$-points. 
Then $\CV_G(k)$ is a scheme and is isomorphic as a scheme to $\VG$. 
Essentially, the class of a $\pi$-point $\alpha_K$ corresponds to the 
homogeneous prime ideal that is the kernel of the restriction map 
$\HHH^*(G,K) \to \HHH^*(K[t]/(t^p), K)/ \Rad(\HHH(K[t]/(t^p))$ along
$\alpha_K$. Thus, since we extend the field $k$, a $\pi$-point may 
correspond to generic point of a homogeneous irreducible subvariety
of $\hgs$. The 
support variety $\CV_G(M)$ of a $kG$-module $M$ is the set of 
all equivalence classes of $\pi$-points $\alpha_K$  
such that $\alpha_K^*(M_K)$ is not projective. 
If $M$ is a finitely generated module then $\CV_G(M)$ is a closed subvariety 
of $\CV_G(k)$. Otherwise, it is just a subset. 

A subcategory of a tensor triangulated category is thick if it is closed under
taking of direct summands. It is a thick tensor ideal if,
in addition, the tensor product
of an object in the subcategory with any other object is again in the
subcategory. There is a complete classification of the
thick tensor ideals of $\stmodg$. Suppose that $\CV$ is a subset of
$\CV_G(k)$ that is closed under specializations, 
meaning that if $\alpha_K$ specializes to $\beta_L$ and if 
$\alpha_K$ is in $\CV$ then so is $\beta_L$. Let $\CM_{\CV}$ be
the full subcategory of $\stmodg$ generated by all $kG$-modules $M$
such that  $\CV_G(M) \subseteq \CV$.
The properties of the support variety are sufficient to insure that
any $\CM_{\CV}$ is a thick tensor ideal in $\stmodg$.

\begin{thm}  \cite{BCR, FP} \label{thm:bcr3}
Every thick tensor ideal in $\stmodg$ is equal to $\CM_{\CV}$ for some
some subset $\CV \subseteq \CV_G(k)$ which is closed under specialization.
\end{thm}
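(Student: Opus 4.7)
The plan is to set up a bijection between thick tensor ideals in $\stmodg$ and specialization-closed subsets of $\CV_G(k)$, given by the two assignments $\CV \mapsto \CM_{\CV}$ (already defined in the excerpt) and $\CM \mapsto \CV(\CM) := \bigcup_{M \in \CM} \CV_G(M)$. The statement of the theorem is the surjectivity of the first assignment, which amounts to showing $\CM = \CM_{\CV(\CM)}$ for every thick tensor ideal $\CM$. The inclusion $\CM \subseteq \CM_{\CV(\CM)}$ is tautological, and the set $\CV(\CM)$ is closed under specialization because each $\CV_G(M)$ with $M$ finitely generated is a closed subvariety of $\CV_G(k)$ (so a specialization of a point in $\CV_G(M)$ still lies in $\CV_G(M)$).

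The serious content is the reverse inclusion $\CM_{\CV(\CM)} \subseteq \CM$: one must show that any finitely generated module $N$ with $\CV_G(N) \subseteq \CV(\CM)$ can be built from objects of $\CM$ using summands, shifts, cofibers, and tensor products with arbitrary modules. The standard engine, due to Benson--Carlson--Rickard and adapted to group schemes by Friedlander--Pevtsova, is the stronger statement: for finitely generated $M, N$, one has $N \in \thick^{\otimes}(M)$ if and only if $\CV_G(N) \subseteq \CV_G(M)$. The ``only if'' follows from the tensor product formula $\CV_G(M \otimes X) = \CV_G(M) \cap \CV_G(X)$ and the closure properties of varieties under triangles and summands. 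For the ``if'' direction, I would proceed through the following steps:

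\smallskip
\noindent\textbf{Step 1 (realization).} For every closed, specialization-closed subset $W \subseteq \CV_G(k)$, construct a finitely generated module with variety equal to $W$. For finite groups this is done via Carlson's $L_\zeta$-modules, taking fibers of the cohomology classes cutting out $W$; for group schemes one uses the $\pi$-point analogue, producing carriers for each homogeneous prime.

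\smallskip
\noindent\textbf{Step 2 (local test modules).} For each equivalence class of $\pi$-point $\alpha_K \in \CV(\CM)$, produce a ``test module'' $\kappa_\alpha$ such that $\kappa_\alpha \otimes X$ is projective precisely when $\alpha \notin \CV_G(X)$. Tensoring modules from $\CM$ with enough $L_\zeta$-type modules (chosen so that $\zeta$ vanishes at $\alpha$ but not generically) cuts down the supporting variety to the closure of $\{\alpha\}$, placing such a $\kappa_\alpha$ inside $\CM$.

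\smallskip
\noindent\textbf{Step 3 (detection).} Use a Dade-lemma-style projectivity criterion — for group schemes this is the $\pi$-point projectivity criterion of Friedlander--Pevtsova — to conclude that if $\CV_G(N) \subseteq \CV_G(M)$, then $N$ is annihilated (up to projectives) by tensoring with an idempotent complementary to the tensor ideal built from the $\kappa_\alpha$ in Step 2, hence $N$ lies in the thick tensor ideal generated by $M$.

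The main obstacle is Step 2, the production of the local test modules inside an arbitrary thick tensor ideal. In the finite group case this reduces to clever combinations of Carlson modules (using the product structure on cohomology), but in the group scheme setting the cohomology ring need not generate the relevant primes in the same transparent way, so one must work with $\pi$-points rather than elements of $\hgs$ and invoke the full strength of the Friedlander--Pevtsova machinery. Once Steps 1--3 are in place, applied with $M$ ranging over a cofinal family in $\CM$ and $N$ an arbitrary element of $\CM_{\CV(\CM)}$, the theorem follows by taking a union over the closed pieces of $\CV(\CM)$.
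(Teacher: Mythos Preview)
The paper does not give its own proof of this theorem: it is stated with the citation \cite{BCR, FP} and no argument follows. So there is no in-paper proof to compare against; the theorem is being quoted as background.

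Your outline is a reasonable high-level summary of the Benson--Carlson--Rickard argument (and its Friedlander--Pevtsova extension to finite group schemes) that the citations point to. The bijection you set up is the right one, and the key input is indeed the statement that $\CV_G(N)\subseteq\CV_G(M)$ forces $N\in\thick^{\otimes}(M)$, with the Carlson modules $L_\zeta$ doing the realization and the detection of projectivity by $\pi$-points (or cyclic shifted subgroups in the group case) doing the rest. One small caution: your Step~2 and Step~3 blur together the two distinct mechanisms in the actual proofs. In \cite{BCR} one does not build pointwise test modules $\kappa_\alpha$ inside $\CM$ for every $\alpha$; rather, one covers $\CV_G(N)$ by finitely many closed sets, each the variety of some $M_i\in\CM$, and then uses a Mayer--Vietoris/colimit argument together with the tensor product theorem to show $N$ lies in the ideal generated by the $M_i$. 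The idempotent language you invoke in Step~3 is closer to the later Rickard/Balmer reformulations than to the original argument. None of this is wrong, but if you were writing this up you would want to be explicit about which of these equivalent routes you are taking, since the ``obstacle'' you flag in Step~2 is largely an artifact of having chosen the pointwise formulation rather than the closed-cover one.
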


If $\CM$ is a thick subcategory of a triangulated category $\CC$, then
the Verdier localization of $\CC$ at $\CM$ is the category whose objects
are the same as those of $\CC$ and whose morphisms are obtained by inverting
a morphism if the third object in the triangle of that morphism is in
the subcategory $\CM$. Thus, a morphism from $L$ to $N$
in the localized category has the form 
\[
\xymatrix{
N  \ar[r]^\theta & M  & L \ar[l]_\gamma
}
\]
where the third object in the triangle of the map $\theta$ is in $\CM$.
So in the localized category $\theta^{-1}\gamma$ is a
morphism. 


\section{Resolution modules} \label{sec:resol}
Assume that  $k$ is a field of characteristic $p$, and 
consider a finite group scheme of the form $G = H \times C$ 
where  $C$  is the group scheme of a cyclic  group of order $p$
and $H$ is finite group scheme defined over $k$.
Let $z$ be a generator for $C$, and $Z = z-1$, so that 
$kC \cong k[Z]/(Z^p)$ and $kG \cong kH \otimes k[Z]/(Z^p)$. 

Suppose that 
\[
\xymatrix{
\dots \ar[r] &C_2 \ar[r]^{\partial} & C_1 \ar[r]^{\partial} & 
C_0 \ar[r] &  0
}
\]
is a complex of $kH$-module, with all $C_i = \{0\}$ for $i <0$. 
We use the complex to define a sequence of $kG$-modules 
which we call resolution
modules. For any $n > 0$, let $M(P_*,n)$
be the $kG$-module whose restriction to $kH$ is the infinite direct sum
\[
C_0 \oplus C_1^{p-1} \oplus C_2 \oplus C_3^{p-1} \oplus \dots 
\oplus C_{2n-1}^{p-1}.
\]
For $i$ odd, define the action of $Z$ on 
$(m_1, \dots, m_{p-1}) \in C_i^{p-1}$ to be
\[
Z(m_1, \dots, m_{p-1}) = (0, m_1, \dots, m_{p-2}) + \partial(m_{p-1}) 
\quad \in C_i^{p-1} \oplus C_{i-1}
\] 
while for $i = 2j$ and $m \in C_i$, let
\[
Zm = \begin{cases}
0 & \text{ if } m \in C_0 \text{  and} \\
(\partial(m), 0, \dots, 0) \in C_{2j-1}^{p-1} & \text{ if } 
m \in C_{2j},  \  j >0 . 
\end{cases}
\]
Note that the action of $Z$ commutes with that of $kH$, since
the boundary maps $\partial$ are $kH$-homomorphisms. Moreover,
$Z^pM = \{ 0\}$ because $C_*$ a complex. Consequently, the relations
define a $kG$-module.

We have a nested sequence of modules 
\[
M(C_*,1) \subseteq M(C_*,2) \subseteq M(C_*,3)
\subseteq \dots \subseteq  M(C_*,\infty).
\]
where $M(C_*,\infty)$ is the limit. 
That is $M(C_*,\infty)$ is the module whose restriction to $H$ is 
$\oplus_{i \geq 0} (C_{2i} \oplus C_{2i+1}^{p-1})$ 
with the action by $Z$ defined as above. 

Note that if $C_*$ is a complex of finitely generated modules, then 
every $M(C_*,n)$ is finitely generated,  though $M(C_*,\infty)$ may not be. 

The construction has several interesting properties. 

\begin{lemma} \label{lem:resol1}
Suppose that $C_*$ and $D_*$ are chain complexes 
of $kH$-modules in nonnegative degrees.
We have the following. 
\begin{enumerate}
\item  Any chain map $\sigma: C_* \to D_*$ in even degrees
induces a homomorphism $M(\sigma):
M(C_*,n) \to M(D_*,n)$ for every $n\geq 0$ and $n = \infty$.
\item For every $n\geq 0$ and for $n = \infty$, $M(C_* \oplus D_*,n)
\cong M(C_*,n) \oplus M(D_*,n)$.
\item If $J$ is a subgroup scheme of $H$, then the restriction of 
$M(C_*, n)$ to $J \times C$ is isomorphic to $M((C_*)_{\downarrow kJ}, n)$
where $(C_*)_{\downarrow kJ}$ is the restriction of $C_*$ to a complex of
$kJ$-modules.
\item If $C_*$ is an exact complex of projective modules, then in the 
stable module category $M(C_*,\infty)$ is zero. 
\item If $C_*$ and $D_*$ are projective resolutions of the same module $N$,
then in the stable category 
$M(C_*,\infty) \cong M(D_*,\infty)$.
\end{enumerate}
\end{lemma}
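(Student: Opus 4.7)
Parts (1)--(3) should follow by unpacking the definition of $M(C_*,n)$ and checking naturality. For (1), I would let $M(\sigma)$ act componentwise: as $\sigma_i$ on the even-degree summand $C_i$ and coordinate-wise as $\sigma_i$ on each copy in $C_i^{p-1}$; the only non-trivial point is that $M(\sigma)$ commutes with $Z$, which reduces immediately to the chain-map identity $\sigma_{i-1}\partial_i = \partial_i\sigma_i$, because the formulas defining $Z$ only involve coordinate shifts and applications of $\partial$. Part (2) is immediate since both the $kH$-decomposition and the $Z$-action split across the summands of $C_* \oplus D_*$. For (3), the $Z$-formulas depend only on the underlying $k$-linear structure and the $kH$-maps $\partial$, all of which are preserved under restriction of the $kH$-action to $kJ$; so the construction commutes with restriction from $G$ to $J\times C$.

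Part (4) is the substantive step. My plan is to exploit the fact that any exact complex of projective $kH$-modules is contractible, so splits as a direct sum of two-term ``disk'' complexes. Concretely, letting $Z_i = \ker\partial_i$, projectivity of $C_i$ yields splittings $C_i = Z_i \oplus Y_i$ with $\partial_i : Y_i \xrightarrow{\sim} Z_{i-1}$, in which all $Z_i,Y_i$ are themselves $kH$-projective. This identifies $C_* \cong \bigoplus_{j \geq 0} D^{(j)}$, where $D^{(j)}$ is the two-term contractible complex $Y_{j+1} \xrightarrow{\sim} Z_j$ in degrees $j+1$ and $j$. By (2), $M(C_*,\infty)\cong\bigoplus_j M(D^{(j)},\infty)$, so it suffices to show each summand is $kG$-projective. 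For fixed $j$, set $Q = Z_j \cong Y_{j+1}$; then $M(D^{(j)},\infty)\cong Q^p$ as a $kH$-module. The plan is to identify this with the induced module $kG\otimes_{kH}Q \cong Q\otimes_k kC$: picking $e \in Q$ placed as a generator in the appropriate coordinate of $M(D^{(j)},\infty)$ (in $D^{(j)}_{j+1}$ when $j$ is odd, or in position $1$ of $D^{(j)}_{j+1}{}^{p-1}$ when $j$ is even), a short direct calculation shows that $e, Ze, \dots, Z^{p-1}e$ spans a free $kC$-submodule of rank one, and assembling such orbits over a $kH$-basis of $Q$ yields the desired $kG$-equivariant identification. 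Since $Q$ is $kH$-projective, $kG\otimes_{kH}Q$ is $kG$-projective, so $M(C_*,\infty)$ is a direct sum of $kG$-projectives, hence zero in $\Stmodg$. The main obstacle will be keeping the $Z$-orbit bookkeeping organized across the two parity subcases, especially the boundary vanishing $\partial e = 0 \in D^{(j)}_{j-1} = 0$ that closes the cycle at $Z^p e = 0$.

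Finally, (5) is a formal consequence of (1), (2) and (4). Two projective resolutions of $N$ are chain-homotopy equivalent, and a generalised Schanuel argument then supplies exact complexes $P_*, P'_*$ of projective $kH$-modules with $C_* \oplus P_* \cong D_* \oplus P'_*$ as chain complexes. Applying (2) and (4) yields, in the stable category,
\[
M(C_*,\infty) \cong M(C_*,\infty) \oplus M(P_*,\infty) \cong M(D_*,\infty) \oplus M(P'_*,\infty) \cong M(D_*,\infty),
\]
which is the required stable isomorphism.
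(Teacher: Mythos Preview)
Your proposal is correct and follows essentially the same route as the paper: parts (1)--(3) are dispatched by direct inspection, part (4) proceeds by splitting the contractible complex into two-term pieces $0\to D_{i+1}\xrightarrow{\sim} D_i\to 0$ and identifying each $M(D^{(j)},\infty)$ with $Q\otimes_k kC$ (the paper writes this as $D_i\otimes k[Z]/(Z^p)$), and part (5) is deduced from (2) and (4) via the existence of exact projective complements $P_*,P'_*$ with $C_*\oplus P_*\cong D_*\oplus P'_*$. Your write-up is in fact more explicit than the paper's about the $Z$-orbit bookkeeping in (4), which is helpful.
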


\begin{proof}
The first item is clear since any chain map commutes with the boundary map
and hence the $kH$-map $M(\sigma):M(C_*,n) \to M(D_*,n)$, which is 
defined on the 
direct sum of the terms of the complex, is a $kG$-homomorphism. The proofs of 
Items 2 and 3 are straightforward. 

Suppose that $C_*$ is an exact complex of projective modules. Then $C_*$ is a
direct sum of complexes having the form  
$0 \to D_{i+1} \to D_i \to 0$. For
$n > i$, either $M(D_*,n) \cong D_{i+1}^{p-1} \oplus D_i$ 
or $M(D_*,n) \cong D_{i+1} \oplus D_i^{p-1}$ 
as $kH$-modules. Because  $\partial$ maps
$D_{i+1}$ isomorphically onto $D_i$ and $D_i$ is 
projective as a $kH$-module,  we conclude that 
$M(D_*,n) \cong D_i \otimes k[Z]/(Z^p)$ is a 
projective module. 

Suppose that $C_*$ and $D_*$ are projective resolutions of the same module $N$.
Then there are chain maps $\sigma: C_* \to D_*$ and $\tau: D_* \to C_*$ 
that lift the identity of $N$. That is, the compositions $\sigma\tau$
and $\tau\sigma$ are homotopic to the identity maps on $D_*$ and $C_*$, 
respectively. It follows that there are exact complexes $P_*$ and $Q_*$
of projective $kH$-modules such that $C_* \oplus P_* \cong D_* \oplus Q_*$.
Thus part (4) follows from parts (2) and (3). 
\end{proof}

From the above we see that there is a functor $\Gamma: \stmod(kH) 
\to \Stmodg$ that takes a $kH$-module $N$ to $M(P_*,\infty)$ where 
$P_*$ is a $kH$-projective resolution of $N$. Moreover, it can be checked
that this is a functor of triangulated categories, since for any map 
between objects in $\stmodh$, the mapping cone of the induce map on 
projective resolutions is a projective resolution of the third object 
in the triangle of that map. 

What is interesting, is that if we assume that $p=2$ and adjust 
the Hopf algebra structure on $kG$, then $\Gamma$ is also a functor
of tensor triangulated categories. That is, the normal coalgebra 
structure on the group algebra 
$kC$ is the diagonal which takes a group element $g$ 
to $g \otimes g$. Because $kG$ is a product of algebras $kG \cong kH \otimes 
k[Z]/(Z^2)$, there is another natural coalgebra map that 
is the given coalgebra map on $H$ and takes $Z$ to $1 \otimes Z +
Z \otimes 1$. This comes by regarding $k[Z]/(Z^2)$ as the restricted
enveloping algebra of a one dimensional restricted Lie algebra. 
Regarding it as a group algebra of a cyclic group of order 2, we 
would have that $Z \mapsto 1 \otimes Z + Z \otimes 1 + Z \otimes Z$. 

Now note that if $P_*$ and $Q_*$ are projective resolutions of 
$kH$-modules $L$ and $N$, then in $M(P_* \otimes Q_*, \infty)$ 
we have that $Z(p\otimes q) = p \otimes Zq + Zp \otimes q =
p \otimes \partial(q) + \partial(p) \otimes q = 
\partial(p \otimes q)$. Thus we have, using the Lie coalgebra
structure,  that 
\[
\Gamma(L \otimes M) = M(P_* \otimes Q_*, \infty) \cong
M(P_*, \infty) \otimes M_(Q_*, \infty) = \Gamma(L) \otimes \Gamma(N).
\]

In addition, it implies that, with the Lie coalgebra structure,
\[
\Gamma(k) \otimes \Gamma(k) \cong \Gamma(k \otimes k) = \Gamma(k)
\]
so that $\Gamma(k)$ is an idempotent module. In the Section \ref{sec:cann}, 
we show that $\Gamma(k)$ is idempotent even without the change in the Hopf 
structure. 


\section{An exact sequence} \label{sec:exact} 
Assume that $k$ and $G = H \times C$ are as before. 
The purpose of this section is to construct a triangle in $\Stmodg$ that 
in Section \ref{sec:cann} is shown to be the canonical triangle of 
idempotent modules associated to the thick subcategory of $\stmodg$
consisting of modules whose support varieties are in the image of 
$\CV_C(k)$ in $\CV_G(k)$. Let $\sfk$ denote the complex of 
$kH$-modules that has $k$ in degree $0$ and the zero module in all other 
degrees. 

Suppose that $P_*$
is a projective resolution of the trivial $kH$-module $k$. Let $C_*$ 
be the augmented complex in nonnegative degrees:
\[
\xymatrix{ 
\dots \ar[r] & P_2 \ar[r]^{-\partial} & P_1 \ar[r]^{-\partial}
& P_0 \ar[r]^{-\varepsilon} & k \ar[r] & 0
}
\]
with the augmentation and boundary maps negated. That is, we set $C_0 = k$
and $C_i = P_{i-1}$, for $i > 0$.   Let $N(P_*, n)$ be the module whose 
restriction to a $kH$-module is the direct sum
\[ 
k \oplus P_0^{p-1} \oplus P_1 \oplus P_2^{p-1} \oplus \dots \oplus P_{2n-1}
\]
Multiplication by $Z$ annihilates the direct summand $k$. For
$m = (m_1, \dots, m_{p-1}) \in P_{2i}^{p-1}$, define
\[
Zm = \begin{cases} - \varepsilon(m_{p-1}) + (0, m_1, \dots, m_{p-2}) 
\in k \oplus P_0^{p-1} & \text{ if } i = 0 \\
-\partial(m_{p-1}) + (0, m_1, \dots, m_{p-2}) \in P_{2i-1} \oplus P_{2i} & 
\text{ if } i > 0  \end{cases}
\]
For $m \in P_{2i-1}$, let 
$Zm = -(\partial(m), 0, \dots, 0) \in P_{2i-2}^{p-1}$. 
Thus, $N(P_*,n)$ looks like $N(C_*,n)$ except that it has 
an odd rather than even number of $kH$-summand.  
In the limit, $N(P_*,\infty) \cong M(C_*, \infty)$.

It is easy to see that Lemma \ref{lem:resol1} 
holds for this construction. 
The main result of this section is the construction of an exact sequence with 
the form given in the next proposition.

\begin{prop} \label{prop:canon-tri}
For the projective resolution $P_*$ as above and any $n>0$, including 
$n = \infty$, there is a
projective $kG$-module $Q$ and an exact sequence 
\[
\xymatrix{
0 \ar[r] & M(P_*,n) \ar[r]^{\theta} & k \oplus Q \ar[r]^{\mu}
& N(P_*,n) \ar[r] & 0
}
\]
where the class of the map $\theta$ in $\Homul_{kG}(M(P_*,n),k)$ is 
the class of the map induced by the augmentation $\varepsilon:P_* \to k$
and the class of the map $\mu$ in $\Homul_{kG}(k, N(P_*,n))$ is the class 
of the map induced by the degree zero inclusion of $\sfk$ into 
the augmented projective resolution $(P_*, \varepsilon)$. 
\end{prop}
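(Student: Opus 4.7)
The plan is to construct the projective $kG$-module $Q$ and the maps $\theta,\mu$ in closed form and then to verify the sequence summand by summand. Take
\[
Q \;=\; \bigoplus_{i=0}^{2n-1} kG \otimes_{kH} P_i,
\]
which is $kG$-projective because each $P_i$ is $kH$-projective and induction from $kH$ to $kG$ preserves projectivity. As a $kH$-module $kG\otimes_{kH}P_i$ splits as $\bigoplus_{j=0}^{p-1} Z^j\otimes P_i$, and a direct count gives $\dim M(P_*,n)+\dim N(P_*,n) = 1+\dim Q$ when $n$ is finite; this dimension identity will supply exactness in the middle.

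On the summand $P_{2j}\subseteq M(P_*,n)$ I set
\[
\theta(m) \;=\; \bigl(\varepsilon(m),\; Z^{p-1}\otimes m \,+\, 1\otimes\partial m\bigr),
\]
with the conventions $\partial m=0$ when $j=0$ and $\varepsilon(m)=0$ when $j>0$, placing $Z^{p-1}\otimes m$ in the $P_{2j}$-slot of $Q$ and $1\otimes\partial m$ in the $P_{2j-1}$-slot. On the summand $P_{2j+1}^{p-1}$ I set
\[
\theta(m_1,\dots,m_{p-1}) \;=\; \Bigl(0,\; \sum_{i=0}^{p-2} Z^i\otimes\partial m_{i+1} \,+\, \sum_{i=1}^{p-1} Z^i\otimes m_i\Bigr),
\]
with the first sum in the $P_{2j}$-slot and the second in the $P_{2j+1}$-slot of $Q$. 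For $\mu$ I take $\mu|_k$ to be the inclusion of $k$ as the first summand of $N(P_*,n)$, and on a generator $1\otimes p\in kG\otimes_{kH}P_i$ I set $\mu(1\otimes p)=p\in P_i$ when $i$ is odd and $\mu(1\otimes p)=(p,0,\dots,0)\in P_i^{p-1}$ when $i$ is even; then $\mu$ is extended to $Q$ by $kG$-linearity.

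The verification splits into four short computations. First, $\theta$ and $\mu$ are $kG$-linear: both are visibly $kH$-linear, so only commutation with $Z$ needs to be checked on each summand. For $\theta$ the sum $\sum_i Z^i\otimes m_i$ is calibrated so that multiplication by $Z$ shifts it by one position, exactly matching the shift $(m_1,\dots,m_{p-1})\mapsto(0,m_1,\dots,m_{p-2})$ in the $Z$-action on $M(P_*,n)$; the new top term $Z^{p-1}\otimes\partial m_{p-1}$ that appears on the left is then absorbed by $\theta$ applied to the boundary contribution $\partial m_{p-1}\in P_{2j}$ coming from the same $Z$-action, using $\partial^2=0$ and $\varepsilon\partial=0$. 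For $\mu$, $kG$-linearity is a parallel bookkeeping using the explicit $Z$-action on $N(P_*,n)$ that walks $(p,0,\dots,0)\in P_i^{p-1}$ successively through its coordinates and then onto $-\partial p\in P_{i-1}$ at the top step. Second, $\theta$ is injective because the various $Z^j\otimes P_i$ slots of $Q$ separate the contributions from the different summands of $m$. Third, $\mu$ is surjective because $\mu|_k$ hits $k$, the images $\mu(Z^j\otimes p_i)$ for $0\le j\le p-2$ hit every coordinate of $P_i^{p-1}$ when $i$ is even, and $\mu(1\otimes p_i)=p_i$ hits $P_i$ when $i$ is odd. Fourth, $\mu\theta=0$ summand by summand: on $P_0$ one computes $\varepsilon(p_0)+Z^{p-1}(p_0,0,\dots,0)=\varepsilon(p_0)-\varepsilon(p_0)=0$; on $P_{2j}$ with $j\ge 1$ one gets $-\partial p_{2j}+\partial p_{2j}=0$; and on $P_{2j+1}^{p-1}$ the two sums produce $(\partial m_1,\dots,\partial m_{p-1})-(\partial m_1,\dots,\partial m_{p-1})=0$ in $P_{2j}^{p-1}\subseteq N(P_*,n)$.

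Combining injectivity of $\theta$, surjectivity of $\mu$, $\mu\theta=0$, and the dimension identity yields exactness for finite $n$; the case $n=\infty$ is obtained by passing to the filtered colimit of the finite-$n$ sequences. The stable-class identifications are then immediate: the projection of $\theta$ to $k$ is by construction the map on $M(P_*,n)$ induced via Lemma~\ref{lem:resol1}(1) by the augmentation chain map $\varepsilon:P_*\to\sfk$, and $\mu|_k$ is literally the inclusion of $k$ as the first summand of $N(P_*,n)$, matching the degree-zero inclusion $\sfk\hookrightarrow(P_*,\varepsilon)$. The main obstacle is pinning down the closed-form formula for $\theta$ on $P_{2j+1}^{p-1}$: the specific coefficients and the interleaving of the two sums are forced by $Z$-equivariance, amounting to solving a small recursion encoded in the $Z$-action on $M(P_*,n)$, and the interleaved pattern above is what resolves it.
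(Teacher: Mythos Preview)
Your proof is correct and follows essentially the same approach as the paper: you take $Q$ to be the direct sum of the induced modules $kG\otimes_{kH}P_i$ (which the paper writes as $P_i\otimes k[Z]/(Z^p)$), define $\theta$ and $\mu$ by the same explicit formulas, and verify $kG$-linearity, $\mu\theta=0$, and exactness. The only minor differences are that you specify $\mu$ on generators and extend by $kG$-linearity (slightly cleaner than the paper's case-by-case formula, which yours reproduces once expanded), and you establish exactness in the middle by a dimension count rather than the paper's filtration argument; these are equivalent bookkeeping devices.
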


\begin{proof}
For $i \geq 0$, let $Q_i = P_i \otimes k[Z]/(Z^p)$ 
which is a projective module over
$kG = kH \otimes k[Z]/(Z^p)$. Let $Q = Q_0 \oplus \dots \oplus Q_{2n-1}$ or 
let $Q = Q_0 \oplus Q_1 \oplus \dots$ in the case that $n = \infty$. 
We define the maps $\theta$ and $\mu$ as follows. 
Let $\ell_Q$ and $\ell_N$ be a generator for the $kH$-summand isomorphic
to $k$ in $k \oplus Q$ and in $N(P_*,n)$, respectively. Then 
\[
\theta(m) = \varepsilon(m)\ell_Q \otimes 1 \quad \oplus \quad 
m \otimes Z^{p-1} \qquad \in 
k\oplus Q_{0} \quad \text{ for } m \in P_{0}.
\]
For $0 < i < n$, let
\[
\theta(m) = \partial(m) \otimes 1 \quad \oplus \quad m \otimes Z^{p-1} 
\qquad \in Q_{2i-1} \oplus Q_{2i} \quad \text{ for } m \in P_{2i}.
\]
For $1 \leq i \leq n$ and $(m_1, \dots, m_{p-1}) \in P_{2i}^{p-1}$, let
\[
\theta(m_1, \dots, m_{p-1}) = \sum_{j = 1}^{p-1} \partial(m_j) \otimes Z^{j-1} 
\ \oplus \ \sum_{j = 1}^{p-1} m_j \otimes Z^{i}
\quad \in Q_{2i-1} \oplus Q_{2i}.
\]
The map $\mu$ is given by the following rules. First, let $\mu(\ell_Q) 
= \ell_N$. For an element $\sum_{j=0}^{p-1}  m_j \otimes Z^j \in Q_i$, let
\[
\mu(\sum_{j=0}^{p-1} m_j \otimes Z^j) = \begin{cases}
-\varepsilon(m_{p-1}) \oplus (m_0, \dots, m_{p-2}) & \in k \oplus P_0^{p-1}
\quad \text{ if } i=0 \\
-\partial(m_{p-1}) \oplus (m_0, \dots, m_{p-2}) & \in P_{i-1} \oplus P_i^{p-1}
\quad \text{ if } i \text{ is even} \\
-(\partial m_, \dots, \partial m_{p-1}) \oplus m_0 &
\in P_{i-1}^{p-1} \oplus P_i  \quad \text{ if } i \text{ is odd}
\end{cases} 
\]
It is easy to see that $\theta$ and $\mu$ are $kH$-homomorphisms. Hence to 
see that they are $kG$-homomorphisms, it is only necessary to show that the
maps commute with the action of $Z$. 
We leave it also to the reader to check that 
$\mu\theta = 0$. Once this is done, 
the exactness of the sequence can be demonstrated by noting that there is 
an obvious filtration on the sequence itself such that the successive 
quotients have the form either 
$0 \to P_i \to k \oplus Q_i \to P_i^{p-1} \to 0$
or $0 \to P_i^{p-1} \to k \oplus Q_i \to P_i \to 0$ where the maps are 
induced by $\theta$ and $\mu$. One can show that these quotient 
sequences are exact. Finally, the maps to and from the summand $k$ in 
the middle term of the sequence can be determined to be as asserted from
the construction. 
\end{proof}


\section{Idempotent Modules} \label{sec:idmod}
In this section, we review some information that we require on idempotent
modules. We also give a brief description of a calculation of the 
endomorphism ring of the trivial module in the stable category localized
at a thick subcategory defined by the subvariety of  an ideal 
generated by a single element in $\hgs$ (see Theorem \ref{thm:bcr3}). 
This material is taken mostly from Rickard's paper \cite{R}.
Variations on the theme and other accounts, can be found in \cite{CW}, 
\cite{Cmods} and the last section of \cite{BG})

In general, associated to a thick tensor ideal $\CM$ in $\stmodg$, for 
any $X$ in $\Stmodg$, there is a 
distinguished triangle in $\Stmodg$ having the form
\begin{equation} \label{eq:cann}
\xymatrix{ 
\CE_{\CM}(X) \ar[r]^{\quad \theta_X} & X \ar[r]^{\mu_X \quad} & 
\CF_{\CM}(X) \ar[r] &  \Omega^{-1}(\CE_{\CM}(X))
}
\end{equation}
and having certain universal properties \cite{R}.
Let $\CM^{\oplus}$ denote the closure of $\CM$ in $\Stmodg$ under 
arbitrary direct sums. The map $\theta_X$ is universal for maps 
from objects in $\CM^\oplus$ to $X$, meaning that if $Y$ is in $\CM^{\oplus}$
then any map $Y \to X$ factors through $\theta_X$. The map 
$\mu_X$ is universal for maps from $X$ to $\CM$-local objects. An object
$Y$ is $\CM$-local if $\Homul_{kG}(M, Y) = \{0 \}$ for all $M$ in 
$\CM$. The universal property says that for a module $Y$  
that is $\CM$-local,  any map $X \to Y$ factors through $\mu_X$. 

For a module $X$, the canonical triangle for $X$ is the tensor product of 
$X$ with the canonical triangle for $k.$ The modules  
$\CE_{\CM}(k)$ and $\CF_{\CM}(k)$ are idempotent module in that 
$\CE_{\CM}(k) \otimes \CE_{\CM}(k) \cong \CE_{\CM}(k)$ and 
$\CF_{\CM}(k) \otimes \CF_{\CM}(k) \cong \CF_{\CM}(k)$ in the stable category.
In addition, the two are orthogonal meaning that 
$\CE_{\CM}(k) \otimes \CF_{\CM}(k)$ is projective, {\it i. e.} zero in 
the stable category. 

It is helpful to know the support varieties of these modules. The following
is well known, but we sketch a proof. 

\begin{prop} \label{prop:varidem}
Suppose that $\CV$ is a collection of subvarieties of $\CV_G(k)$ that is closed
under specialization. Let $\CM = \CM_\CV$,
the thick tensor ideal of all finitely generated $kG$-modules $M$ such 
that $\CV_G(M)$ is in $\CV$. Then $\CV_G(\CE_{\CM}(k)) = \CV$ and 
$\CV_G(\CF_{\CM}(k)) = \CV_G(k) \setminus \CV$. 
\end{prop}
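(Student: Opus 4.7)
The plan is to derive both identities from the canonical triangle
$\CE_{\CM}(k) \to k \to \CF_{\CM}(k)$
together with the orthogonality $\CE_{\CM}(k) \otimes \CF_{\CM}(k) = 0$ in
$\Stmodg$, by testing at each $\pi$-point $\alpha_K$ representing a prime
$\fp \in \CV_G(k)$. I would first prove a dichotomy that partitions
$\CV_G(k)$ into the two support varieties, and then identify the partition
using a finitely generated witness on one side and the construction of
$\CE_{\CM}(k)$ as a homotopy colimit on the other.

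For the dichotomy, restricting the canonical triangle along $\alpha_K$
produces a triangle
$\alpha_K^*\CE_{\CM}(k) \to K \to \alpha_K^*\CF_{\CM}(k)$
in $\Stmod K[t]/(t^p)$ whose end terms remain tensor-orthogonal. Since
$\Stmod K[t]/(t^p)$ is a local tensor triangulated category (its spectrum is
a single point), the only objects sitting in such a triangle with $K$ and
being mutually tensor-orthogonal are $0$ and $K$ themselves; hence exactly
one of $\alpha_K^*\CE_{\CM}(k)$, $\alpha_K^*\CF_{\CM}(k)$ is non-projective.
Thus $\CV_G(\CE_{\CM}(k))$ and $\CV_G(\CF_{\CM}(k))$ partition $\CV_G(k)$,
and it suffices to show that $\CV_G(\CE_{\CM}(k)) = \CV$.

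For the identification, if $\fp \in \CV$ I would use closure under
specialization to produce a finitely generated $L \in \CM$ with
$\fp \in \CV_G(L)$ (for instance a Carlson $L_\zeta$ module supported on
$\overline{\{\fp\}}$). Since $L \in \CM$, the universal property of the
idempotents applied to $L$ gives $\CE_{\CM}(k) \otimes L \cong L$ in
$\Stmodg$; the tensor–intersection formula for supports, valid since $L$ is
finitely generated, then forces $\fp \in \CV_G(\CE_{\CM}(k))$. Conversely,
if $\fp \notin \CV$, I would invoke Rickard's construction of $\CE_{\CM}(k)$
as a homotopy colimit of a sequence of finitely generated objects of $\CM$:
since $\alpha_K^*$ sends every such object to a projective
$K[t]/(t^p)$-module (by definition of $\fp \notin \CV$) and commutes with
homotopy colimits up to projectives, $\alpha_K^*\CE_{\CM}(k)$ is projective.

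The main obstacle is rigorously justifying the compatibility of the base
change $\alpha_K^*$ with the infinite constructions in question: that
$\alpha_K^*$ preserves the homotopy colimit building $\CE_{\CM}(k)$, and
that the tensor-idempotents in $\Stmod K[t]/(t^p)$ are trivial. Both rely
on $\alpha_K^*$ being a coproduct-preserving exact tensor functor that
sends projectives to projectives, combined with the local nature of
$K[t]/(t^p)$.
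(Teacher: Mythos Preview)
Your proposal is correct. The partition step and the inclusion $\CV\subseteq\CV_G(\CE_\CM(k))$ match the paper's argument closely: the paper also derives disjointness and covering from orthogonality plus the canonical triangle (your restriction along $\alpha_K$ to $\Stmod K[t]/(t^p)$ is precisely the unpacking of ``$\CE_\CM(k)\otimes\CF_\CM(k)$ projective implies disjoint supports'' in the $\pi$-point formulation), and for $\fp\in\CV$ the paper likewise picks a finitely generated $M\in\CM$ with $\fp\in\CV_G(M)$ and uses the universal property to factor the identity on $M$ through $M\otimes\CE_\CM(k)$.

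The genuine divergence is in the step $\fp\notin\CV\Rightarrow\fp\notin\CV_G(\CE_\CM(k))$. You argue on the $\CE$ side: write $\CE_\CM(k)$ as a homotopy colimit of finitely generated objects of $\CM$ (Rickard's construction), observe that $\alpha_K^*$ preserves coproducts and triangles and hence homotopy colimits, and conclude that a homotopy colimit of projectives is projective. The paper argues on the $\CF$ side instead: it takes a (generally infinitely generated) module $M$ whose support is the single generic point $\fp$, notes that $M$ is $\CM$-local because its support misses $\CV$, and uses the universal property of $\mu_k$ to exhibit $M$ as a summand of $M\otimes\CF_\CM(k)$, forcing $\fp\in\CV_G(\CF_\CM(k))$. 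Your route is more self-contained, since it avoids invoking single-point modules (whose construction typically already rests on idempotent functors), at the cost of the compatibility checks you correctly identify as the main burden; the paper's route is shorter but treats the existence of such point modules as known input.
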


\begin{proof}
The fact that $\CE_{\CM}(k) \otimes \CF_{\CM}(k)$ is projective, implies 
that their support varieties are disjoint. The fact that the trivial module
is the third object in a triangle involving the two implies that 
$\CV_G(\CE_{\CM}(k)) \cup \CV_G(\CF_{\CM}(k)) = \CV_G(k)$. 
If $V$ a closed subvariety of  $\CV$, then the universal property says that 
that the identity homomorphism of a finitely generated module $M$ with 
$V_G(M) = V$ factors through $M \otimes \CE_{\CM}(k)$. Thus $V \in 
\CV_G(\CE_{\CM}(k))$. On the other hand, if $V \in \CV_G(k)$ is not in 
$\CV$, then there is a module $M$ in $\Stmodg$ 
whose support variety is the one 
(generic) point $V$. Then $M$ is $\CM$-local, and the universal property 
implies that $M$ is a direct summand of $M \otimes \CF_{\CM}(k)$. So 
$V$ is not in $\CV_G(\CE_{\CM}(k))$.
\end{proof} 

Choose a nonnilpotent element $\zeta \in \HHH^n(G,k)$ for some $n > 0$ and
let $V = V_G(\zeta)$ be the variety of the ideal generated by $\zeta$. 
Note that $\zeta$ is represented by a cocycle $\zeta: k \to \Omega^{-n}(k)$.
For convenience, we denote the shifts of this map $\Omega^t(k) \to 
\Omega^{t-n}(k)$, also by $\zeta$.  Let 
$\CM_V$ be the thick tensor ideal consisting of all finitely generated 
$kG$-modules $M$ with $V_G(M) \subseteq V$. The cohomology of any element
in $\CM$ is annihilated by a power of $\zeta.$ As a consequence, for
$X$ in $\stmodg$, any map $\tau:k \to X$, whose third object in its triangle 
is in $\CM$, has the property that it is a factor of 
$\zeta^t: k \to \Omega^{-tn}(k)$ for some $t$, sufficiently large. 
That is, there is a map $\beta: X \to \Omega^{-tn}(k)$ such that 
$\zeta^t = \beta\tau$.

Thus it can be shown \cite{R} that the module $\CF_{\CM}(k)$ can be taken to 
be the direct limit (to be precise, we take a homotopy colimit) of the system 
\[
\xymatrix{
k \ar[r]^{\zeta} & \Omega^{-n}(k) \ar[r]^{\zeta} & 
\Omega^{-2n}(k) \ar[r]^{\zeta} & \Omega^{-3n}(k) \ar[r]^{\zeta} & \dots
}
\]
Likewise, $\CE_{\CM}(k)$ can be taken to be the homotopy colimit of the 
third objects in the triangles of the maps $\zeta^t: k \to \Omega^{-tn}(k)$. 
It is the third object in the triangle $k \to \CF_{\CM}(k)$. For $X$ 
in $\Stmodg$, the canonical triangle \ref{eq:cann} involving $X$ is the 
tensor product of $X$ with this one. Also $\CF_{\CM}(X)$ is $\CM$-local 
and the universal properties are satisfied. 

From all of this, it is routine to show the following (see \cite{R}).

\begin{prop} \label{prop:trivendo}
Let $\CC$ be the Verdier localization of the category $\stmodg$ at the 
thick tensor ideal $\CM_V$ for $V = V_G(\zeta)$ as above. Then the 
endomorphism ring $\Hom_{\CC}(k,k)$ of the trivial module $k$ is the 
degree zero part of the localized cohomology ring $\hgs[\zeta^{-1}]$. 
\end{prop}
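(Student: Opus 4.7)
The plan is to transport the computation of $\Hom_{\CC}(k,k)$ into the stable category via the canonical triangle, and then evaluate using the explicit homotopy colimit description of $\CF_{\CM}(k)$.

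First I would use the universal property of $\mu_k \colon k \to \CF_{\CM}(k)$. Because $\CF_{\CM}(k)$ is $\CM$-local, the cone of $\mu_k$ lies in $\CM^{\oplus}$, so $\mu_k$ becomes an isomorphism in the Verdier localization $\CC$. Combining this with the fact that the localization functor $\stmodg \to \CC$ sends maps to $\CM$-local objects to bijections (since any map from an object of $\CM$ into $\CF_{\CM}(k)$ is zero in $\Stmodg$), I obtain
\[
\Hom_{\CC}(k,k) \;\cong\; \Hom_{\CC}(k, \CF_{\CM}(k)) \;\cong\; \Homul_{kG}(k, \CF_{\CM}(k)).
\]

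Next I would compute the right-hand side using the description of $\CF_{\CM}(k)$ as the homotopy colimit of
\[
\xymatrix{k \ar[r]^{\zeta} & \Omega^{-n}(k) \ar[r]^{\zeta} & \Omega^{-2n}(k) \ar[r]^{\zeta} & \cdots}
\]
Since $k$ is a compact object of $\Stmodg$, the functor $\Homul_{kG}(k, -)$ commutes with such sequential homotopy colimits, so
\[
\Homul_{kG}(k, \CF_{\CM}(k)) \;\cong\; \varinjlim_{t} \Homul_{kG}(k, \Omega^{-tn}(k)) \;\cong\; \varinjlim_{t} \HHH^{tn}(G,k),
\]
with transition maps given by multiplication by $\zeta$. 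By the very definition of the homogeneous localization, this colimit is the degree-zero component of $\hgs[\zeta^{-1}]$: every element is represented by a class $\alpha \in \HHH^{tn}(G,k)$ and two representatives $\alpha \in \HHH^{sn}$, $\beta \in \HHH^{tn}$ determine the same element iff $\zeta^r \alpha$ and $\zeta^r \beta$ agree in some $\HHH^{(s+t+r)n}$ after suitable shifting.

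Finally, I would verify that this isomorphism is one of rings. Composition in $\Hom_{\CC}(k,k)$ corresponds, via the identification with $\Homul_{kG}(k,\CF_{\CM}(k))$ and the fact that $\CF_{\CM}(k)$ is a ring object (idempotent, with unit $\mu_k$) in $\Stmodg$, to the product
\[
(f,g) \longmapsto (f \otimes g)\circ \Delta \colon k \to \CF_{\CM}(k)\otimes \CF_{\CM}(k) \xrightarrow{\sim} \CF_{\CM}(k).
\]
Tracing representatives $f \colon k \to \Omega^{-sn}(k)$ and $g \colon k \to \Omega^{-tn}(k)$ through the colimit identifies this product with the cup product $f\smile g \in \HHH^{(s+t)n}(G,k)$, which is precisely the multiplication in $\hgs[\zeta^{-1}]$. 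I expect the main obstacle to be nothing deep but simply the bookkeeping for this last step: checking carefully that the homotopy colimit makes $\CF_{\CM}(k)$ into the correct unital associative algebra object so that composition really matches cup product, rather than some twisted variant. Once that is done the proposition follows, and the argument is essentially the one given by Rickard in \cite{R}.
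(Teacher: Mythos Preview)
Your argument is correct in substance but takes a different route from the paper. The paper works directly with the calculus of fractions: a morphism $\tau^{-1}\gamma$ in $\CC$, with $\tau\colon k \to X$ having cone in $\CM_V$, is rewritten using the fact (established just before the proposition) that some power $\zeta^t$ factors through $\tau$, say $\zeta^t = \beta\tau$ for a map $\beta\colon X \to \Omega^{-tn}(k)$; then $\tau^{-1}\gamma = (\beta\tau)^{-1}(\beta\gamma) = \zeta^{-t}(\beta\gamma)$ with $\beta\gamma \in \HHH^{tn}(G,k)$. This is a short direct calculation with roofs and never invokes $\CF_{\CM}(k)$ or homotopy colimits. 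Your approach is the Bousfield-localization version: identify $\Hom_{\CC}(k,k)$ with $\Homul_{kG}(k,\CF_{\CM}(k))$ and then use compactness of $k$ to pull the colimit outside. Both give the same answer; the paper's method is quicker and more concrete, while yours is more structural and makes the ring-object role of $\CF_{\CM}(k)$ explicit.

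One point needs tightening. You write $\Hom_{\CC}(k, \CF_{\CM}(k))$, but $\CF_{\CM}(k)$ is typically not finitely generated, hence not an object of $\CC = \stmodg/\CM_V$ at all, so this expression is not defined as stated. The fix is standard: the inclusion $\stmodg/\CM_V \hookrightarrow \Stmodg/\CM_V^{\oplus}$ is fully faithful (this is in \cite{R}), and in the larger quotient $\mu_k$ becomes an isomorphism while $\CF_{\CM}(k)$ is $\CM^{\oplus}$-local. Your chain of isomorphisms then reads
\[
\Hom_{\CC}(k,k) \;\cong\; \Hom_{\Stmodg/\CM_V^{\oplus}}(k,k) \;\cong\; \Hom_{\Stmodg/\CM_V^{\oplus}}(k,\CF_{\CM}(k)) \;\cong\; \Homul_{kG}(k,\CF_{\CM}(k)),
\]
and the rest of your argument, including the multiplicativity check via the idempotent ring structure on $\CF_{\CM}(k)$, goes through.
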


\begin{proof}
Recall that there is a natural identification 
$\Hom(k, \Omega^{-m}(k)) \cong \HHH^m(G,k)$ for any $m \geq 0$.
Choose an element in $\Hom_{\CC}(k,k)$. It must have the form 
$\tau^{-1}\gamma$ where $\gamma: k \to X$ and $\tau: k \to X$ has 
the property that the third object in the triangle of $\tau$ is 
in $\CM_V$. Then as above, for some $t$ there exist $\zeta: X 
\to \Omega^{-tn}(k)$ such that $\zeta^t = \tau\beta$. So we have a 
diagram 
\[
\xymatrix{ 
k \ar[r]^\tau & X \ar[d]^\beta & k \ar[l]_\gamma \\
&\Omega^{-tn}(k)
}
\]
and $\tau^{-1}\gamma =  (\beta\tau)^{-1}(\beta\gamma) = \zeta^{-t}\beta\gamma$
where $\beta\gamma$ is an element of $\HHH^{tn}(G,k)$. 
\end{proof}

We end this section with a straightforward calculation that is needed 
later. 

\begin{prop}  \label{prop:rank2}
Suppose that $G = \langle y,z \rangle$ is an elementary abelian group 
of order $p^2$, and $H = \langle y \rangle$. 
Let $k$ be a field of characteristic $p$. Let $P_*$ be a minimal
$kH$-projective resolution of the trivial module $k$. Let $V$ be the 
variety corresponding to the point defined by the subgroup $\langle z \rangle$,
and let $\CM_V$ be the thick tensor ideal of finitely generated module $M$
such that $V_G(M) = V$. Then $\CE_{\CM}(k) \cong M(P_*, \infty)$,
$\CF_{\CM}(k) \cong N(P_*, \infty)$ and 
the canonical triangle of $k$ as in \ref{eq:cann} is the triangle as given
in Proposition \ref{prop:canon-tri},  
defined by the augmentation map $\varepsilon: P_* \to k$.  
\end{prop}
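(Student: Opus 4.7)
The plan is to identify the triangle constructed in Proposition \ref{prop:canon-tri} at $n = \infty$ with the canonical triangle of idempotent modules attached to $\CM_V$. That canonical triangle is determined up to isomorphism by the two universal conditions that its first term belongs to $\CM_V^\oplus$ and its third term is $\CM_V$-local, so the proposition reduces to verifying these conditions for the triangle in question. First I would convert the short exact sequence of Proposition \ref{prop:canon-tri} into a distinguished triangle in $\Stmodg$: since $Q$ is projective, the middle term $k \oplus Q$ is stably isomorphic to $k$, and the proposition already identifies the induced maps as the augmentation $M(P_*, \infty) \to k$ and the degree-zero inclusion $k \to N(P_*, \infty)$.

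The bulk of the work is to show $M(P_*, \infty) \in \CM_V^\oplus$. Since $M(P_*, \infty)$ is the direct limit of the finitely generated modules $M(P_*, n)$ and $\CM_V^\oplus$ is closed under homotopy colimits of such chains, this reduces to showing that each $M(P_*, n)$ has support contained in $V$. I would verify this via $\pi$-points. By Lemma \ref{lem:resol1}(3) the restriction of $M(P_*, n)$ to $kH$ is the projective $kH$-module $P_0 \oplus P_1^{p-1} \oplus \cdots \oplus P_{2n-1}^{p-1}$, so the $\pi$-point $t \mapsto y - 1$ restricts trivially. For any other $\pi$-point with direction $[a:b] \in \bP^1$ and $a \neq 0$, I would verify that the operator $a(y-1) + b(z-1)$ acts freely on $M(P_*, n)$ by a direct computation on the explicit basis, which can be streamlined by first reducing to the minimal $kH$-resolution via Lemma \ref{lem:resol1}(5).

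Finally, I would show $N(P_*, \infty)$ is $\CM_V$-local. Since $N(P_*, \infty)$ is not finitely generated, a pure $\pi$-point argument on supports does not immediately suffice. A cleaner route is to identify $N(P_*, \infty)$ with the homotopy-colimit model of $\CF_{\CM_V}(k)$ recalled in Section \ref{sec:idmod}: since $V$ is cut out by a single non-nilpotent element $\zeta \in H^*(G, k)$ (for $p = 2$, the class in $H^1(G, k)$ dual to $y - 1$), we have $\CF_{\CM_V}(k) \cong \mathrm{hocolim}(k \xra{\zeta} \Omega^{-1}(k) \xra{\zeta} \cdots)$, and this colimit matches our $N(P_*, \infty)$ via the explicit construction of Section \ref{sec:exact}. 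The main obstacle is the $\pi$-point computation in the second step for non-coordinate directions; this can be streamlined by a change of generators of $kG$ mapping the given $\pi$-point to $\alpha_y$, together with the naturality in Lemma \ref{lem:resol1}(1) for the transformation of $M(P_*, n)$.
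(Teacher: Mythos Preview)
Your overall strategy---verify that $M(P_*,\infty)\in\CM_V^\oplus$ and that $N(P_*,\infty)$ is $\CM_V$-local, then invoke uniqueness of the canonical triangle---is sound in principle, but the execution collapses into the paper's proof at the decisive point. In your Step 3 you propose to establish $\CM_V$-locality of $N(P_*,\infty)$ by identifying it with Rickard's homotopy colimit $\operatorname{hocolim}(k\xra{\zeta}\Omega^{-m}(k)\xra{\zeta}\cdots)$, claiming this ``matches \dots\ via the explicit construction of Section~\ref{sec:exact}.'' But Section~\ref{sec:exact} only builds the short exact sequence; it does \emph{not} identify $N(P_*,n)$ with any $\Omega^{-2n}(k)$, nor the map $k\to N(P_*,n)$ with $\zeta^n$. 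That identification is precisely the content of the paper's proof here: one writes down explicit generators and relations for $N(P_*,n)$, constructs $\Omega^{-2n}(k)$ from the tensor product of the minimal injective resolutions of $k$ over $kH$ and $kC$, matches the relations, and then checks that the structure map is the inflation of the degree-two polynomial generator of $\hhs$ raised to the $n$th power. You have asserted this step rather than carried it out, so the core of the argument is missing. Note also that once this identification is made, Rickard's construction already gives $N(P_*,\infty)\cong\CF_{\CM_V}(k)$ and hence $M(P_*,\infty)\cong\CE_{\CM_V}(k)$ as the third vertex of the triangle; your Steps~1--2 then become redundant.

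Separately, the ``change of generators'' shortcut in Step~2 is not as clean as you suggest. An automorphism of $kG$ sending a non-$z$-direction $\pi$-point to $\alpha_y$ will in general move the subgroup $C=\langle z\rangle$, hence alter the decomposition $kG\cong kH\otimes kC$ that is built into the very definition of $M(P_*,n)$; Lemma~\ref{lem:resol1}(1) concerns chain maps of complexes over a \emph{fixed} $H$, not changes of the pair $(H,C)$. A correct Step~2 would require a genuine computation of freeness of $aY+bZ$ on the explicit module, and even then you would still need an independent argument for $\CM_V$-locality of $N(P_*,\infty)$ (support considerations alone are delicate for infinitely generated modules), which brings you back to the identification above.
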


\begin{proof}
First notice that in the minimal resolution $P_*$, every $P_i \cong kH$, 
a $p$ dimensional module with basis consisting of $1, Y, \dots Y^{p-1}$ 
where $Y = 1+y$. The map 
$\partial: P_{2i+1} \to P_{2i}$ takes $1$ to $Y$ and 
$\partial: P_{2i} \to P_{2i-1}$ takes $1$ to $Y^{p-1}.$  Let $Z = 1+z$.
Hence, $N(P_*,n)$ has a basis $u_0, \dots, u_{2n-1}$
where $u_{2i} = (1, 0, \dots, 0) \in P_{2i}^{p-1}$, 
and $u_{2i+1} = 1 \in P_{2i+1}$
for $i = 0, \dots, n-1$. Thus, we can see from the definition of $N(P_*,n)$
that 
\[
Yu_{2i}= -Zu_{2i+1} \ \text{ for } 0 \leq i \leq n-1  \text{  and  }
Y^{p-1}u_{2i-1} = -Z^{p-1}u_{2i} \ \text{ for } 1 \leq i \leq n-1.
\]  
The map $\varepsilon: k \to N(P_*, n)$ induced 
by the augmentation $P_* \to k$ takes $1 \to Z^{p-1}u_0$. 

An injective $kH$-resolution of $k$ has the form 
\[
\xymatrix{
0 \ar[r] & k \ar[r]^{\epsilon} & R_0 \ar[r] & R_1 \ar[r] & \dots
}
\]
where every $R_i \cong kH$, $\epsilon(1) \in Y^{p-1}R_0$, and the boundary 
maps alternate between multiplication by $Y$ and by $Y^{p-1}$. Similarly, for
$A = kZ/(Z^p)$ an $A$-injective resolution of $k$ has the form 
$0 \to k \to Q_0 \to Q_1 \dots$ where every $Q_i \cong A$ and the maps are
as above with $Z$ substituted for $Y$. Thus, a $kG$-injective resolution of 
the $k$ is the tensor product $Q_* \otimes R_*.$ Now,  $\Omega^{-2n}(k)$
is the quotient 
\[
\Omega^{-2n}(k) = (Q_* \otimes R_*)_{2n-1}/\partial((Q_* \otimes R_*)_{2n-2}).
\]
This module is generated by the classes of the element $v_i = 1 \otimes 1$
in $Q_i \otimes R_{2n-i-1}$ for $i= 0, \dots, 2n-1$. We have that for 
$1 \otimes 1 \in Q_{2i}\otimes R_{2(n-i)-2}$, 
\[
\partial(1 \otimes 1) = Z \otimes 1 \ + \ 1 \otimes Y 
\quad \in (Q_{2i+1}\otimes R_{2(n-i)-2}) \ \oplus \ 
(Q_{2i} \otimes R_{2(n-i)-1}),
\]
while for $1 \otimes 1$ in $Q_{2i-1} \otimes R_{2(n-i)-1}$
\[
\partial(1 \otimes 1) = Z^{p-1} \otimes 1 \ - \ 1 \otimes Y^{p-1}
\quad \in (Q_{2i}\otimes R_{2(n-i)-1}) \ 
\oplus \ (Q_{2i-1} \otimes R_{2(n-i)}).
\]
Thus we have relations $Zv_{2i+1} = - Yv_{2i}$ and 
$Z^{p-1}v_{2i} = Y^{p-1}v_{2i-1}$.  
These are (except for signs) the same relations
as for $N(P_*,n)$, and hence $\Omega^{-2n}(k) \cong N(P_*,n)$. 

Next notice that, with the above identification, the map $k \to N(P_*,n)$
takes $1 \in k$ to the class of $Z^{p-1}v_0$ which is contained in 
$Q_0 \otimes R_{2n-1}$.  In the case that $n=1$, this is the 
cohomology class of the inflation to $G$ of the polynomial generator $\zeta$
in degree 2 of $\hhs$. For $n>1$ it represents the class of $\zeta^n$. 
Thus we have a sequence of maps 
\[
\xymatrix{
k \ar[r]^\zeta & \Omega^{-2}(k) \ar[r]^\zeta & \Omega^{-4}(k) \ar[r]^\zeta 
& \Omega^{-6}(k) \ar[r] & \dots 
}
\]
By the construction of \cite{R} that is summarized at the beginning 
of this section, we have that the canonical triangle associated to 
$\CM_V$ is the triangle of the map $k \to N(P_*,\infty)$. 
This proves the proposition. 
\end{proof}


\section{Canonical triangles} \label{sec:cann}
Assume as before 
that $kG = kH \otimes kC$  where $kH$ and $kC$ are Hopf subalgebras and 
$kC \cong k[Z]/(Z^p)$.
Let $\CW$ be the point
in $\CV_G(k)$ which is the  image of the restriction map
$\res_{G, C}^*: \CV_C(k) \to \CV_G(k)$. That is, $\CW$ is the equivalence
class of the inclusion $k[Z]/(Z^p) \to kG$ viewed as a $\pi$-point. 
Let $\CM = \CM_\CW$ be the thick tensor ideal of $\stmodg$ consisting of all 
modules whose variety is in $\CW$. Thus a module in $\CM$ is
a projective module on restriction to a $kH$-module and is not projective
when restricted to $kC$.

Let $P_*$ be a $kH$-projective resolution of the trivial $kH$-module $k$.
As in the last section, let $\CE = \Gamma(k) = M(P_*, \infty)$
and $\CF = N(P_*,\infty)$. Let  
\[
\xymatrix{
\CS: \qquad  & \CE \ar[r]^{\theta} & k \ar[r]^{\mu} & \CF \ar[r] &
\Omega^{-1}(\CE).
}
\]
be the triangle of the exact sequence in Proposition \ref{prop:canon-tri}.

For any $X \in \Stmodg$, tensoring with $\CS$ we obtain  a triangle
\[
\xymatrix{
X \otimes \CS: & \CE(X) \ar[r] & X \ar[r] \ar[r] &
\CF(X) \ar[r] & \Omega^{-1}(\CE(X))
}
\]
Our object is to show that the triangles $X \otimes \CS$ satisfy certain
universal properties. This allows us to calculate the endomorphisms in the
Verdier localization at the subcategory $\CC$. The first step is to
establish the support varieties of the modules $\CE$ and $\CF$.

\begin{prop} \label{prop:varidem2}
The variety of $\CE$ is $\CV_G(\CE) = \CW$, 
the set consisting of the single equivalence class or 
$\pi$-points as above.  The variety of $\CF$ is
$\CV_G(k) \setminus \CW$.
\end{prop}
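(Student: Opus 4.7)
The plan is to establish two structural restriction facts about $\CE$ and $\CF$, combine them with the triangle from Proposition~\ref{prop:canon-tri}, and then reduce the remaining analysis to the rank-two case of Proposition~\ref{prop:rank2} via Lemma~\ref{lem:resol1}(3). First I observe that $\CE\downarrow_{kH} = \bigoplus_{i \geq 0}(P_{2i} \oplus P_{2i+1}^{p-1})$ is a direct sum of projective $kH$-modules and hence is $kH$-projective; therefore $\alpha_K^*(\CE)$ is projective for every $\pi$-point of $G$ that factors through $KH_K$, giving $\CV_G(\CE) \cap \mathrm{image}(\res_{G,H}^*) = \emptyset$. Second, a direct computation of the $Z$-action on $\CF = N(P_*,\infty)$ shows that $\ker Z$ and $Z^{p-1}\CF$ coincide: both equal the copy of $k$ together with the cycles $\ker(\partial|_{P_i})$ sitting in the last slot of each $P_{2i}^{p-1}$ summand and in each $P_{2i+1}$ summand, the equality of kernel and image being the exactness of the augmented resolution. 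Hence $\CF\downarrow_{kC}$ is projective, so $\CW \notin \CV_G(\CF)$. Combining with $\CV_G(k) \subseteq \CV_G(\CE) \cup \CV_G(\CF)$ (from the triangle $\CS$) yields $\CW \subseteq \CV_G(\CE)$ and $\CV_G(\CF) \subseteq \CV_G(k) \setminus \CW$.

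To upgrade to $\CV_G(\CE) \subseteq \CW$, let $\alpha_K$ be any $\pi$-point of $G$ not equivalent to $\CW$; I must show $\alpha_K^*(\CE)$ is projective. If $\alpha_K$ factors through $KH_K$, this is immediate from the first step. Otherwise the equivalence class of $\alpha_K$ has nonzero component in both the $H$- and $C$-directions, and using the $H$-direction I pick (after possibly extending scalars) a cyclic subgroup scheme $J \subseteq H_K$ of order $p$ so that $\alpha_K$ factors through $J \times C_K \subseteq G_K$ without being equivalent to the $C$-point of $\CV_{J \times C}(k)$. Since $(P_*)\downarrow_{kJ}$ is a $kJ$-projective resolution of $k$, Lemma~\ref{lem:resol1}(3) identifies $\CE\downarrow_{J \times C}$ with $M((P_*)\downarrow_{kJ}, \infty)$, and Proposition~\ref{prop:rank2} identifies this further with the canonical idempotent module $\CE_{\CM_{V}}(k)$ for $V$ the $C$-point of $\CV_{J\times C}(k)$; Proposition~\ref{prop:varidem} then gives $\CV_{J\times C}(\CE\downarrow_{J\times C}) = \{V\}$, so $\alpha_K^*(\CE)$ is projective. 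Therefore $\CV_G(\CE) = \CW$, and the equality $\CV_G(\CF) = \CV_G(k) \setminus \CW$ follows from the first paragraph together with the triangle.

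The principal obstacle is the reduction of a general non-$\CW$ $\pi$-point to one factoring through a product subgroup $J \times C_K$. The abelian unipotent subgroup of $G_K$ through which $\alpha_K$ naturally factors may be embedded diagonally, of the form $(j,c) \mapsto (j\phi(c), c)$ for a homomorphism $\phi \colon C_K \to H_K$, rather than as a direct product. One must either invoke shifted-subgroup calculus to replace $\alpha_K$ by an equivalent $\pi$-point that does factor through an untwisted product, or precompose with the automorphism of $G_K$ untwisting $\phi$ and verify that $\CE$ transforms compatibly; either way one must track the class of $\alpha_K$ in $\CV_{J \times C}(k)$ to ensure the support-variety conclusion still applies.
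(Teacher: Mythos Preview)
Your overall strategy matches the paper's: reduce to the rank-two case via Lemma~\ref{lem:resol1}(3) and Proposition~\ref{prop:rank2}. Your preliminary computations (that $\CE\downarrow_{kH}$ and $\CF\downarrow_{kC}$ are projective) are correct and pleasant, though the paper does not use them; once the rank-two reduction is in hand they are subsumed.

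The obstacle you flag in your final paragraph is exactly the point where your argument is incomplete, and the paper resolves it differently from either of your two proposed fixes. Rather than try to factor $\alpha_K$ itself through a product subgroup $J \times C_K$, or untwist via an automorphism of $G_K$, the paper works on the cohomological side of the $\pi$-point/prime-ideal correspondence. Writing $\varphi_\alpha \colon \HHH^*(G,K) \to K[T]$ for the map (modulo nilpotents) induced by $\alpha_K$, and $\varphi$ for its restriction to the subring $\HHH^*(H,K) \subseteq \HHH^*(G,K)$, one checks that if $\alpha_K$ is not equivalent to $\CW$ then $\ker\varphi$ is a proper homogeneous prime of $\HHH^*(H,K)$, hence corresponds to some $\pi$-point $\gamma_K \colon K[t]/(t^p) \to KH_K$. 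Now set $KE = K[u,v]/(u^p,v^p)$ and define $\mu \colon KE \to KG_K$ by $u \mapsto \gamma_K(t)$ and $v \mapsto Z$; this is flat because $\gamma_K$ is flat and its image commutes with $KC$. The key observation is that $\ker\varphi_\alpha$ contains $\ker\mu^*$, so the point of $\Proj\HHH^*(G,K)$ corresponding to $\alpha_K$ lies in the image of $\Proj\HHH^*(E,K)$; translating back, there is a $\pi$-point $\hat\alpha_K$ of $E$ with $\mu\circ\hat\alpha_K$ \emph{equivalent} to $\alpha_K$. This sidesteps the twisting issue entirely: one never claims $\alpha_K$ itself factors through the product, only that an equivalent $\pi$-point does, and equivalence is exactly what the support-variety question cares about. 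From here Lemma~\ref{lem:resol1}(3) and Proposition~\ref{prop:rank2} finish the argument just as you outline.
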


\begin{proof}
Suppose that $K$ is an extension of $k$ and let $\alpha_K:K[t]/(t^p) \to 
KG$ be a $\pi$-point.  Let $\beta: k[t]/(t^p) \to KG$ be the $\pi$ point 
given by $\beta(t) = Z$. Our objective is to show that if $\alpha_K$ is not
equivalent to $\beta$ then the class of $\alpha_K$ is not in the variety 
of $\CE$. We know that the class of $\beta$ is in $\CV_G(\CE)$. So assume
that $\alpha_K$ is not equivalent to $\beta$. Recall that $\CV_G(k) 
\cong \Proj \hgs$. The equivalence send the class of $\alpha_K$ to the 
variety of the kernel of the induced map from the 
cohomology of $G$ to that of the 
$A_K = K[t]/(t^p)$. Recall that 
$\HHH^*(A,K)/\Rad(\HHH^*(A,K)) \cong K[T]$, a polynomial 
ring, and $\HHH^*(G, K) \cong \HHH(H,K) \otimes \HHH^*(C,K)$. Moreover, 
since $C$ is a cyclic group of order $p$, 
$\HHH^*(C,K)/\Rad(\HHH^*(C,K)) \cong K[\zeta]$ is 
a polynomial ring in the degree two element $\zeta$. In particular, two 
$\pi$-points are in the same class if varieties of the corresponding 
kernels are the same. 

For the $\pi$-point $\alpha_K$, let $\varphi_\alpha: \HHH^*(G,K) \to 
\HHH^*(A,K) \cong K[T]$ be the map induced by the restriction. 
Let $\varphi:\HHH^*(H,K) \to K[T]$ be the restriction of 
$\varphi_\alpha$ to $kH$ and then inflated to $kG$. That is, we 
restrict $\varphi_\alpha$ to the subring $\HHH(H,K) \otimes 1$ of
$\HHH^*(G,K)$ and compose this with the map $\HHH^*(G,K) \to 
\HHH^*(H,K)$. Notice that if the image of $\varphi$, which is a 
subring of $K[T]$, is only the field $K$, then $\alpha_K$ is equivalent
to $\beta$. That is, in such a case, if $\eta \otimes \zeta^n \in 
\HHH(H,K) \otimes \HHH^*(C,K)$ and the degree of $\eta$ is greater than 
zero, then $\varphi(\eta \otimes \zeta^n) = \varphi(\eta \otimes 1)
\varphi(1 \otimes \zeta^n) = 0$. Thus $\alpha_K$ and $\beta$ correspond 
to the same element of $\Proj \hgs$. Hence, we may assume that the 
kernel of $\varphi$ is a nonzero prime ideal in $\HHH^*(H,K)$,
which is not the ideal of all positive elements. 

Let $\gamma_K: k[t]/(t^p) \to KH \otimes 1 \subseteq kG$ 
be a $\pi$ point corresponding to 
$\varphi$. Let $KE = K[u,v]/(u^p, v^p)$ and let $\mu:KE
\to KG$ be defined by $\mu(t) = \gamma_K(u)$ and 
$\mu(u) = \beta_K(v) = 1 \otimes Z$. Note that $\mu(u)$ and $\mu(v)$ 
commute so that the given conditions on $\mu$ define a homomorphism.
Moreover, $\mu$ is a flat embedding since $\gamma_K$ is a flat 
embedding and $\beta_K(K[t]/(t^p)) = 1 \otimes KC$. 
Note especially that 
the kernel of $\varphi_\alpha$ is contained in the kernel of 
$\gamma_K^*: \HHH^*(G,K) \to \HHH^*(E,K)$. Thus there is some 
$\pi$-point $\hat{\alpha}_K: K[t]/(t^p) \to KE$ such that the 
composition $\gamma_K\hat{\alpha}_K$ is equivalent to $\alpha_K$. 

We consider the restriction $M = \mu^*(\CE)$ to a $KE$-module. 
The projective resolution $K \otimes P_*$ restricted to $KE$ is 
$KE$-projective resolution of $K$. Hence, 
we have that $M \cong M(K \otimes P_*, \infty)$.
Hence, from Lemma \ref{lem:resol1}, we know the variety of $M$ 
and know also that the third object in the triangle of $\theta:M \to K$
is $N = N(K \otimes P_*, \infty)$. 
The variety of consists only of the class of the $\pi$-point 
$\hat{\beta}: K[t]/(t^p) \to KE$ given by $t \mapsto v$. It 
follows that $\hat{\alpha}_K$ is not in the variety of $N$ and that
$\alpha_K$ is not in the variety of $\CE$. Since the restriction to 
$kE$ takes triangles to triangles, we have also that $\alpha_K$ is
in the vareity of $\CF$. This proves the theorem. 
\end{proof}

We can now establish the universal properties.

\begin{thm} \label{thm:uni-resol}
Assume the hypotheses and notation of this section. For any $kG$-module $X$,
the triangle $X \otimes \CS$ is the canonical triangle as in \ref{eq:cann}
for $X$ relative to $\CM$. In particular, $1_X\otimes \theta$ is universal
with respect to maps from objects in $\CM^{\oplus}$ to $X$ 
and $1_X \otimes \mu$
is universal with respect to maps from X to $\CM$-local objects. 
\end{thm}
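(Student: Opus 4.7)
The approach is to recognize $\CS$ as the canonical triangle for $k$ relative to $\CM$. Once this is done, the statement for arbitrary $X$ follows automatically: since $\CM$ is a tensor ideal and $-\otimes X$ is exact, the canonical triangle of $X$ is obtained by tensoring the canonical triangle of $k$ with $X$, and the universal properties transfer. By the uniqueness of Bousfield-style canonical triangles, to identify $\CS$ it suffices to verify (a) that $\CE$ lies in the localizing subcategory of $\Stmodg$ generated by $\CM$, and (b) that $\CF$ is $\CM$-local.

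For (a), the nested union $M(P_*,1) \subseteq M(P_*,2) \subseteq \dots \subseteq M(P_*,\infty) = \CE$ exhibits $\CE$ as a filtered colimit of finitely generated modules, and hence as a homotopy colimit in $\Stmodg$ via the standard Milnor triangle. Each $M(P_*,n)$ restricts to $kH$ as a direct sum of terms of the form $P_i$ or $P_i^{p-1}$, each of which is $kH$-projective. Combined with Proposition~\ref{prop:varidem2}, which forces $\CV_G(M(P_*,n)) \subseteq \CV_G(\CE) = \CW$, this shows $M(P_*,n) \in \CM$ for every $n$, so $\CE$ lies in the localizing subcategory generated by $\CM$.

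For (b), the core step is to show $\Homul_{kG}(M,\CF) = 0$ for every finitely generated $M \in \CM$. By Proposition~\ref{prop:varidem2}, $\CV_G(M) \subseteq \CW$ and $\CV_G(\CF) = \CV_G(k) \setminus \CW$ are disjoint. Testing at each $\pi$-point $\alpha_K$: if $\alpha_K \notin \CW$ then $\alpha_K^*(M)$ is projective, while if $\alpha_K \in \CW$ then $\alpha_K^*(\CF)$ is projective; in either case $\alpha_K^*(M \otimes \CF) = \alpha_K^*(M) \otimes \alpha_K^*(\CF)$ is projective. Combined with $\CF = \varinjlim_n N(P_*,n)$ and the compatibility of restriction with filtered colimits, this gives $M \otimes \CF = 0$ in $\Stmodg$. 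Using the Hopf adjunction $\Homul_{kG}(M,\CF) \cong \Homul_{kG}(k,M^* \otimes \CF)$ (valid for finitely generated $M$) together with $M^* \in \CM$ (support variety is duality-invariant), one concludes $\Homul_{kG}(M,\CF) = 0$. Since $\Homul_{kG}(-,\CF)$ converts direct sums into products, $\CM$-locality extends to all of $\CM^\oplus$.

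From (a) and (b), the universal properties follow by standard long exact sequence arguments: for $Y \in \CM^\oplus$, applying $\Homul(Y,-)$ to $\CS$ makes the map $\Homul(Y,\CE) \to \Homul(Y,k)$ surjective (the $\Homul(Y,\CF)$ terms vanish by (b) and its $\Omega$-shifts), which is the universal property of $\theta$; the dual argument with $\Homul(-,Z)$ for $Z$ being $\CM$-local yields the universal property of $\mu$. Tensoring the entire triangle with an arbitrary $X$ preserves (a) by the tensor-ideal property of $\CM$ and preserves (b) by the same adjunction argument, so $X \otimes \CS$ is the canonical triangle for $X$. The main obstacle is the rigorous justification of $M \otimes \CF = 0$ in $\Stmodg$ when $\CF$ is infinite-dimensional: the classical support tensor product theorem is typically stated for finitely generated modules, and extending it here requires either a careful $\pi$-point argument using compatibility of restriction with tensor products, or an explicit colimit verification using the structure of the modules $N(P_*,n)$.
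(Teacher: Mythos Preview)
Your route is essentially the paper's: verify that $X\otimes\CF$ is $\CM$-local via the adjunction $\Homul_{kG}(M,X\otimes\CF)\cong\Homul_{kG}(k,M^{*}\otimes X\otimes\CF)$ together with the disjointness of $\CV_G(M^{*})$ and $\CV_G(\CF)$, and then read off both universal properties from the long exact sequence attached to $X\otimes\CS$. The paper does exactly this, after first noting that $\CE\otimes\CF=0$ (disjoint varieties), so that $\CE$ and $\CF$ are idempotent. Where you are more explicit than the paper is in step~(a), supplying a reason why maps from $X\otimes\CE$ into $\CM$-local objects vanish; the paper simply asserts $\Homul_{kG}(X\otimes\CE,N)=0$ without further comment.

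The gap is in your justification of~(a). Knowing that $M(P_*,n)\!\downarrow_{kH}$ is projective only rules out the $\pi$-points that factor through $kH$; it does \emph{not} force $\CV_G(M(P_*,n))\subseteq\CW$. For instance, with $G=\langle y,z\rangle$ elementary abelian of order~$4$ and $H=\langle y\rangle$, the two-dimensional module on which $y$ and $z$ act by the same nontrivial Jordan block is free over $kH$, yet the $\pi$-point $t\mapsto (y{-}1)+(z{-}1)$ lies in its variety. Nor does Proposition~\ref{prop:varidem2} help here: it computes $\CV_G(\CE)$, not $\CV_G(M(P_*,n))$, and support varieties are not inherited by submodules. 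The conclusion $M(P_*,n)\in\CM$ is nonetheless true, but you must either rerun the $\pi$-point reduction in the proof of Proposition~\ref{prop:varidem2} for the finite truncation (after restricting to the rank-two subgroup scheme, split the restricted resolution as a minimal piece plus a split-exact piece and check each contributes support only at $\CW$), or---closer to what the paper implicitly does---bypass~(a) altogether. The tensor vanishing you flag as the ``main obstacle'' is in fact unproblematic in the $\pi$-point framework: restriction along a $\pi$-point respects tensor products and colimits, and over $K[t]/(t^p)$ a tensor product is projective once one factor is, so $\CV_G(M)\cap\CV_G(N)=\varnothing$ forces $M\otimes N=0$ in $\Stmodg$ even for infinite-dimensional modules. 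This gives $\CE\otimes\CF_{\CM}(k)=0$ directly, whence $\CE\cong\CE_{\CM}(k)$ lies in $\CM^{\oplus}$.
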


\begin{proof}
First note that $\CE \otimes \CF$ is a projective module, zero in the
stable category, because the intersection of the varieties of the two
modules is empty. Thus tensoring $\CS$ with $\CE$ or $\CF$, we see that
$\CE$ and $\CF$ are idempotent modules.

Suppose next that $M$ is in
$\CM$. Then
\[
\Homul_{kG}(M, X \otimes \CF) =
\Homul_{kG}(k, M^*\otimes X \otimes \CF) - \{0\}.
\]
since $M$ is finitely generated and the dual $M^*$ of $M$ and $\CF$ have
disjoint varieties. Thus, $X \otimes \CF$ is $\CM$-local, and
by Lemma 5.2 of \cite{R}, $X \otimes \CF$ is $\CM^\oplus$-local.
We claim that the map from $X$ to $X \otimes \CF$ is universal
map from $X$ to $\CM$-local objects. The reason is that if $N$ is
$\CM$-local then $\Homul_(kG)(X\otimes \CE, N) = \{0\}$ and so
$\Homul_{kG}(X \otimes \CF, N) \cong \Homul(X, N)$, the isomorphism
being induced by the map $X \to X \otimes \CF$. So any map from
from $X$ to $N$ factors through $1_X \otimes \mu$.

Likewise, we can see that the map $X \otimes \CE \to X$ is universal with
respect to map from an object $Y$ in $\CM^\oplus$ to $X$. That is, if $M$ is
in $\CM^\oplus$, then because $\CF$ is $\CM^\oplus$-local, the
canonical triangle for $X$ yields an isomorphism $\Homul_{kG}(Y, X \otimes \CE)
\cong \Homul_{kG}(Y,X)$. Thus any map from $Y$ to $X$ factors through
$1_X \otimes \theta$. This proves the theorem.
\end{proof}


\section{The endomorphism ring of the trivial module} \label{sec:endo}
In this section we assume the hypotheses and notation of 
Section \ref{sec:cann}, just previous. It is well known that the 
endomorphism ring of the trivial module in the localized category 
is associated to the structure of the module $\CF$. 
Here is where we use our development of the structure of $\CF$. 

Let $G = H \times C$ as before. 
Let $\CM$ denote the thick tensor ideal in 
$\stmodg$ consisting of all $kG$-modules whose variety is the point
in $V_G(k)$ which is the  image of the restriction map
$\res_{G, C}^*: V_C(k) \to V_G(k)$. Let $\CC$ be the Verdier localization
of $\stmodg$ at $\CM$. We are interested in the ring $\Homul_{kG}(k,k) =
\Hom_{\CC}(k,k)$. Given a morphism $k \rightarrow^\sigma M \leftarrow^\tau
k$ with the third object of the triangle of $\sigma$ in $\CM$, we get a 
diagram 
\[
\xymatrix{
& U \ar[d]^\nu \\
\CF \ar[r] & k \ar[r]^\mu \ar[d]^\sigma & \CF \\
& M
}
\]
with $U$ in $\CM$. Because $\CF$ is $\CM$-local the composition 
$\mu\nu$ is the zero map. Thus, there is a map $\varphi: M \to \CF$ 
such that $\varphi\sigma = \mu$. The morphism $\sigma^{-1}\tau$
is equal to one of the form $\mu^{-1}\alpha$ for $\alpha = \varphi\tau$.
Thus every endomorphism of $k$ is factors through $\CF$. 

Note that we need not worry that $\CF$ is infinitely generated. In the 
above, we could have replaced $\CF$ with the submodule whose restriction
to $kH$ is $N(P_*,n)_{\downarrow H} \cong k \oplus P_0^{p-1} \oplus 
\dots \oplus P_{2n-1}$ for $n$ sufficiently 
large. By the same argument as above, this is the third object in the 
triangle of the map $M(P_*,n) \to k$. The point of using $\CF$ is that 
we have a context to compute compositions of morphisms. 

So suppose we have two endomorphisms of $k$ in $\CC$, $\mu^{-1}\alpha$
and $\mu^{-1}\beta$. For the purposes of computing the product we may 
assume that there exist $m$ and $n$ such that $\alpha(k) \subseteq P_m$ 
and $\beta(k) \subseteq P_n$. That is, otherwise $\alpha$ and $\beta$ 
can be written as sums of such elements. The product is the map 
$\mu^{-1}\alpha^\prime \beta$ in the diagram
\[
\xymatrix{
k \ar[dr]^\mu && k \ar[dl]^\alpha \ar[dr]^\mu && k \ar[dl]^\beta \\
& \CF \ar[dr]^{Id} && \CF \ar[dl]^{\alpha^\prime} \\
&& \CF
}
\]
The construction of $\alpha^\prime$ is through a chain map on complexes
\[
\xymatrix{
\dots \ar[r]  & P_2 \ar[r] \ar[d]  & P_1 \ar[r] \ar[d] &
P_0  \ar[r]^{\varepsilon} \ar[d] & k \ar[r] \ar[d]^\alpha & 0 \\
\dots \ar[r] & P_{m+2} \ar[r]^\partial & P_{m+1} \ar[r]^\partial &
P_m   \ar[r]^{\partial} & P_{m-1} \ar[r] & \dots \ar[r] & P_0 \ar[r] &
k \ar[r] & 0
}
\]
obtained by lifting the map $\alpha$. Then the chain map induces the 
map $\alpha^\prime: \CF \to \CF$ as in Lemma \ref{lem:resol1}. It is 
clear that the diagram commutes, {\it i. e.} $\alpha^\prime \mu = \alpha$. 

All of this sets up the proof of our main theorem. 

\begin{thm} \label{thm:trivendo}
In the localized category $\CC$ the endomorphism ring of $k$ is isomorphic
to the negative Tate cohomology ring of $H$:
\[
\Homul_{kG}(k,k) \cong \widehat{\HHH}^{\leq 0}(H, k).
\]
\end{thm}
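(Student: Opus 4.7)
The plan is to reduce the computation of $\End_{\CC}(k)$ to that of $\Homul_{kG}(k,\CF)$ using the factorisation in the discussion before the theorem, to compute the latter directly from the explicit structure of $\CF = N(P_*,\infty)$, and to match the resulting ring structure with the Yoneda product on negative Tate cohomology. The reduction is already in hand: every morphism $k \to k$ in $\CC$ factors uniquely as $\mu^{-1}\alpha$ with $\alpha \in \Homul_{kG}(k,\CF)$, and composition is given by $\mu^{-1}\alpha \cdot \mu^{-1}\beta = \mu^{-1}(\alpha'\beta)$, where $\alpha' \colon \CF \to \CF$ arises by lifting $\alpha$ to a chain map on the $kH$-projective resolution $P_*$.

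For the additive computation, a $kG$-homomorphism $k \to \CF$ is given by an element $x$ of
\[
\CF|_{kH} \;=\; k \oplus P_0^{p-1} \oplus P_1 \oplus P_2^{p-1} \oplus P_3 \oplus \cdots
\]
that is both $H$-fixed and $Z$-annihilated. Writing $x = a + \sum_{i\geq 0} u_{2i} + \sum_{i\geq 0} v_{2i+1}$ according to this decomposition and expanding $Zx = 0$ slot by slot via the formulas of Section~\ref{sec:exact}, one finds that every coordinate of each $u_{2i} \in P_{2i}^{p-1}$ vanishes except the last, that $(u_{2i})_{p-1}$ must lie in $(\Omega^{2i+1}_{kH}k)^H$ (with the convention $\Omega^1_{kH}k = \ker\varepsilon$), and that each $v_{2i+1}$ must lie in $(\Omega^{2i+2}_{kH}k)^H$. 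Thus
\[
\Hom_{kG}(k,\CF) \;\cong\; k \;\oplus\; \bigoplus_{j\geq 1}(\Omega^j_{kH} k)^H.
\]
The subgroup $\PHom_{kG}(k,\CF)$ is the image of the $kG$-norm $\Sigma_G = \Sigma_H Z^{p-1}$ acting on $\CF$. A direct computation of $Z^{p-1}$ on each summand shows that $\Sigma_G$ annihilates $k$ and $P_0^{p-1}$, carries each $P_{2i+1}$ into the $(\Omega^{2i+1}_{kH}k)^H$-slot via $-\Sigma_H\!\circ\!\partial$ with image $\Sigma_H\cdot\Omega^{2i+1}_{kH}(k)$, and carries each $P_{2i}^{p-1}$ (for $i \geq 1$) into the $(\Omega^{2i}_{kH}k)^H$-slot with image $\Sigma_H\cdot\Omega^{2i}_{kH}(k)$. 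Since each summand is sent into a single slot, the total image is exactly $\bigoplus_{j \geq 1} \Sigma_H\cdot\Omega^j_{kH}(k)$, and quotienting yields
\[
\Homul_{kG}(k,\CF) \;\cong\; k \;\oplus\; \bigoplus_{j\geq 1}\widehat{\HHH}^{-j}(H,k) \;=\; \widehat{\HHH}^{\leq 0}(H,k).
\]

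For the ring structure, an element $\alpha \colon k \to \CF$ whose image lies in a single $P_m$-summand of $\CF|_{kH}$ corresponds under the identification to a cocycle representing a class in $\widehat{\HHH}^{-(m+1)}(H,k)$, and the chain-map lift $P_* \to P_{*+m}$ used in the construction of $\alpha'$ is precisely a Yoneda representative of this Tate class. Consequently $\mu^{-1}(\alpha'\beta)$ realises the Yoneda product on $\widehat{\HHH}^{\leq 0}(H,k)$, so the additive isomorphism upgrades to a ring isomorphism. I expect the main obstacle to lie in the summand-by-summand bookkeeping of the middle paragraph: verifying that the $Zx = 0$ constraints collapse each $u_{2i}$-tuple to its last coordinate lying in the correct $\Omega^j$, and that the norm image meets each slot exactly in $\Sigma_H\cdot\Omega^j(k)$ with no diagonal overlap between slots. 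Once this is pinned down, the ring isomorphism follows formally from the Yoneda interpretation of $\alpha'$.
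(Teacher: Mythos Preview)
Your argument is correct and follows the same overall strategy as the paper: reduce $\End_{\CC}(k)$ to $\Homul_{kG}(k,\CF)$ via the discussion preceding the theorem, and identify the multiplication with composition of chain maps on the augmented resolution $(P_*,\varepsilon)$. The difference lies only in the additive identification. The paper assumes from the outset that $P_*$ is \emph{minimal}; then $\Hom_{kH}(k,P_n)$ already computes $\HHH_n(H,k)=\widehat{\HHH}^{-n-1}(H,k)$ (the induced differentials vanish), and since no $\Omega^j_{kH}(k)$ has a free summand the norm contribution is zero, so the $P_n$-slot of $\Homul_{kG}(k,\CF)$ is read off in one line. You instead allow an arbitrary $P_*$ and compute $\Hom_{kG}(k,\CF)$ and $\PHom_{kG}(k,\CF)$ separately via the integral $\Sigma_G=\Sigma_H Z^{p-1}$, obtaining $(\Omega^j k)^H/\Sigma_H\!\cdot\!\Omega^j k=\widehat{\HHH}^{-j}(H,k)$ slot by slot. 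Your route is longer but more explicit and does not rely on minimality; the paper's route is terser but leans on that hypothesis. One small wording fix: $\Sigma_G$ does not literally annihilate $P_0^{p-1}$ because $Z^{p-1}$ already does---rather $Z^{p-1}$ sends $P_0^{p-1}$ into the $k$-summand, and then $\Sigma_H$ kills $k$ since $p$ divides $|H|$.
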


\begin{proof}
Because the projective resolution $P_*$ is minimal, we know that 
$\Hom_{kG}(k, P_n) \cong \widehat{\HHH}^{-n-1}(H,k).$ The Tate cohomology 
group $\widehat{\HHH}^{-n-1}(H,k)$ is also isomorphic to homotopy classes
of chain maps of degree $-n-1$ from the augmented complex  
$(P_*, \varepsilon)$ to itself  
as in the diagram. Indeed that diagram defines the 
correspondence. The product of two elements is defined (see \cite{BC2}) 
as the composition of the chain maps. Thus the element represented 
by $\alpha^\prime \beta$ is the product of the elements represented 
by $\alpha$ and $\beta$ in the Tate cohomology.
\end{proof}

There are a few cases in which we can be very specific about the structure
of the endomorphism ring of the trivial module in these localized
categories.  We end the section with two examples and a remark on the 
role of Hopf structures.
The reader should recall that a group
has periodic cohomology (in characteristic p) if and only if its
Sylow $p$-subgroup is either cyclic or quaternion (with $p=2$).

\begin{prop} \label{prop:periodic}
Suppose that $H$ has periodic cohomology meaning that the
cohomology ring $\HHH^*(H,k)$ has Krull dimension one. Then
the endomorphism ring of $k$ is the degree zero part of a localization
of the cohomology ring of $G$. Specifically,
\[
\Homul_{kG}(k,k) \cong \Hom_{\CC}(k,k) \cong
\sum_{n \geq 0} \HHH^{nm}(G,k)\zeta^{-n}
\]
where $\zeta$ is an element is a regular element of degree $m$ in
$\hhs$.
\end{prop}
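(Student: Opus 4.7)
The plan is to reduce Proposition~\ref{prop:periodic} to Proposition~\ref{prop:trivendo}, which computes the endomorphism ring of $k$ in the Verdier localization of $\stmodg$ at a thick tensor ideal cut out by a single nonnilpotent cohomology class. The key observation is that, in the periodic setting, the point $\CW$ is precisely the vanishing locus in $\CV_G(k)$ of the inflation of a periodicity generator, so the subcategory $\CM_\CW$ already has the form required by Proposition~\ref{prop:trivendo}.

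First I would exploit the hypothesis: since $\hhs$ has Krull dimension one, $\CV_H(k) = \Proj\hhs$ is a single (reduced) point, and one can choose a regular homogeneous element $\zeta \in \HHH^m(H,k)$ for some $m>0$. Inflating along the projection $G = H \times C \to H$ places $\zeta$ inside $\hgs$; under the K\"unneth isomorphism $\hgs \cong \hhs \otimes \HHH^*(C,k)$ this is the element $\zeta \otimes 1$. Because $\HHH^*(C,k)$ is free over $k$, the inflated element remains regular, and in particular nonnilpotent. Abuse notation and call it $\zeta$ again. The next step is to check that $V_G(\zeta) = \CW$. The inclusion $\CW \subseteq V_G(\zeta)$ is immediate: a representative $\pi$-point for $\CW$ is $k[t]/(t^p) \to kG$, $t \mapsto Z$, whose induced restriction on cohomology factors through $\HHH^*(C,k)$ and kills every positive-degree element of $\hhs \otimes 1$. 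Conversely, modulo nilpotents $\hgs$ is a polynomial ring in two variables, one from each tensor factor, so $\CV_G(k)$ is a projective line on which the locus $\zeta = 0$ is a single point; the same conclusion is read off from the $\pi$-point argument used in the proof of Proposition~\ref{prop:varidem2}.

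With the identification $\CM = \CM_\CW = \CM_{V_G(\zeta)}$ in hand, Proposition~\ref{prop:trivendo} yields $\Hom_\CC(k,k) \cong \bigl(\hgs[\zeta^{-1}]\bigr)_0$. A degree zero element in this graded localization is uniquely a sum of terms $x\zeta^{-n}$ with $x \in \HHH^{nm}(G,k)$, delivering the announced description. The only technical point in the argument is the verification that $V_G(\zeta) = \CW$, but this is short given the K\"unneth structure of $\hgs$ and the $\pi$-point machinery already set up in Sections~\ref{sec:prelim} and~\ref{sec:cann}.
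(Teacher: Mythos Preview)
Your proposal is correct and follows essentially the same route as the paper: identify $\CW$ with $V_G(\zeta)$ via the K\"unneth decomposition $\hgs \cong \hhs \otimes \HHH^*(C,k)$ (the paper simply asserts this identification), and then invoke Proposition~\ref{prop:trivendo}. Your write-up supplies more justification for the equality $V_G(\zeta)=\CW$ than the paper does; the only minor imprecision is the claim that $\hgs$ modulo nilpotents is literally a polynomial ring in two variables, which need not hold verbatim, but your alternative $\pi$-point argument covers this and the conclusion is unaffected.
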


\begin{proof}
Because $G = H \times C_p$ is a direct product,
$\hgs \cong \hhs \otimes \HHH^*(C_p,k)$. If $\zeta \in \HHH^m(H,k)$ is
a regular element, the variety $V$ is $V = V_G(\zeta)$, where we identify
$\zeta$ with $\zeta \otimes 1$. So the proposition follows from the
discussion in Section \ref{sec:idmod}. We note that this result is
compatible with Theorem \ref{thm:trivendo} since $\sum_{n \leq 0}
\widehat{\HHH}^{-n}(H,k) \cong
\sum_{n \leq 0}\widehat{\HHH}^{-n}(H,k)\gamma^n$ where $\gamma$ is a
degree two generator for $\HHH^*(C_p,k)$.
\end{proof}

On the other hand, suppose that $H$ has $2$-rank at least $2.$ Then we
get a very different result.

\begin{prop} \label{prop:eleab}
Suppose that $H$ is an elementary abelian $p$-group of order at least $p^2$.
Then $\Hom_{\CC}(k,k)$ is a local $k$-algebra whose radical is infinitely
generated and has square zero.
\end{prop}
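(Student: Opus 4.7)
The plan is to apply Theorem \ref{thm:trivendo} to identify $\Hom_{\CC}(k,k)$ with $R := \widehat{\HHH}^{\leq 0}(H,k)$, and then to show directly that the graded ring $R$ is local with an infinitely generated square-zero maximal ideal. Since $H$ is a $p$-group, the norm map on $k$ vanishes, so $\widehat{\HHH}^{0}(H,k) \cong k$, and I write $J := \bigoplus_{n \geq 1} \widehat{\HHH}^{-n}(H,k)$ for the sum of the strictly negative components. This is a graded two-sided ideal with $R/J \cong k$.

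The main step will be to prove $J^{2} = 0$. For this I would invoke the vanishing of products in strictly negative Tate cohomology for elementary abelian $p$-groups of rank at least $2$, as established in \cite{BC2}. Once $J^{2} = 0$ is available, locality is immediate: any element $r_{0} + j \in R$ with $r_{0} \in k^{\times}$ and $j \in J$ is a unit with inverse $r_{0}^{-1}(1 - r_{0}^{-1} j)$ (since $(r_{0}^{-1} j)^{2} = 0$), while every element of $J$ itself squares to zero. Hence $J$ is exactly the set of non-units and is the Jacobson radical.

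For infinite generation, I would observe that because $J^{2} = 0$, the $R$-action on $J$ factors through $R/J \cong k$, so any ideal-generating set for $J$ already spans $J$ as a $k$-vector space. It therefore suffices to check $\dim_{k} J = \infty$. This follows from the nonvanishing of $\widehat{\HHH}^{-n}(H,k)$ in every negative degree, which comes from Tate duality $\widehat{\HHH}^{-n-1}(H,k) \cong \widehat{\HHH}^{n}(H,k)^{\ast}$ together with the standard fact that $\HHH^{n}(H,k) \neq 0$ for all $n \geq 0$ when $H$ is elementary abelian of rank at least $2$.

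The hard step is the vanishing $J^{2} = 0$, which is the whole substance of the proposition and the contrast with Proposition \ref{prop:periodic}: in the Krull-dimension-one setting one gets a localization of the cohomology ring, while here the multiplicative structure collapses entirely on strictly negative degrees. Everything else is formal manipulation with graded rings once the Benson--Carlson vanishing from \cite{BC2} is imported as a black box.
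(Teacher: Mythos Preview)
Your proposal is correct and follows essentially the same route as the paper: invoke Theorem~\ref{thm:trivendo} to identify $\Hom_{\CC}(k,k)$ with $\widehat{\HHH}^{\leq 0}(H,k)$, cite \cite{BC2} for the vanishing of products in strictly negative degrees, and use Tate duality to see that the negative part is infinite-dimensional. The paper's proof is terser and leaves the locality and infinite-generation deductions implicit, whereas you have spelled out those formal consequences, but the substance is identical.
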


\begin{proof}
By \cite{BC2}, the  product of any two elements in negative cohomology
for $H$ zero. So by Theorem \ref{thm:trivendo}, the radical of $\Hom_{\CC}(k,k)$
which consist of all elements in negative degrees, has square zero.
At the same time, $\widehat{\HHH}^{-n}(H,k)$ is Tate dual to
$\HHH^{n-1}(H.k)$ and hence its dimension grows as $n$ becomes large.
\end{proof}

\begin{rem} \label{rem:coalg}
The assumption throughout this article has been that $G = 
H \times C$ where $H$ and $C$ are subschemes, {\it i. e.} that $kH$ and 
$kC$ are cocommutative sub-Hopf algebras of $kG$. 
Even though the coalgebra structure played a role in
the proofs, it has no role in the statement of the main theorem. Neither
the endomorphism ring $\End_{\CC}(k)$ nor the structure of the negative
cohomology ring depend on even the existence of a coalgebra structure.
Hence, if we have two algebras $kH$ and $kC$ on which we can create 
coalgebra maps of the right sort, then we come to the same result. 
For example, suppose that $G$ is an elementary abelian $p$-group and
that $\alpha:k[t]/(t^p)$ is a $\pi$-point defined over $k$. Let $kC$ be
the image of $\alpha$. Then $kC$ has a complement $kH$ such that 
$kH$ is the group algebra of an elementary abelian $p$-group of 
one smaller rank than $G$ and 
$kG \cong kH \otimes kC$. Let $\CM$ denote the thick tensor ideal of 
modules whose support varieties are either empty or consist only of
the class of $\alpha$,  and let $\CC$ be the localization of 
$\stmodg$ at $\CM$. Then the main theorem applies. 
\end{rem}


\end{document}